\newtheorem{theorem}{Theorem}
\newtheorem{corollary}[theorem]{Corollary}
\newtheorem{lemma}[theorem]{Lemma}
\newtheorem{remark}[theorem]{Remark}
\newcommand{\A}{\mathcal{A}}
\begin{document}
\title{Acyclic complexes and Gorenstein rings}
%\author{Sergio Estrada and Alina Iacob and Holly Zolt}
\author{Sergio Estrada}
\address[S. Estrada]{Departamento de Matem\'aticas. Universidad de Murcia. Murcia 30100. ESPA\~{N}A}
\email{sestrada@um.es}

\author{Alina Iacob}
\address[A. Iacob]{Department of Mathematical Sciences. Georgia Southern University. Statesboro (GA) 30460-8093. USA}
\email{aiacob@GeorgiaSouthern.edu}

\author{Holly Zolt}
\address[H. Zolt]{Department of Mathematical Sciences. Georgia Southern University. Statesboro (GA) 30460-8093. USA}
\email{hz00179@georgiasouthern.edu}

\subjclass[2000]{16E10; 16E30}
\keywords {totally acyclic complex, Gorenstein injective module, Gorenstein projective module, Gorenstein flat module, Ding injective module}

\begin{abstract}
For a given class of modules $\A$, we denote by $\widetilde{\A}$ the class of exact complexes $X$ having all cycles in $\A$, and by $dw(\A)$ the class of complexes $Y$ with all components $Y_j$ in $\A$. We use the notations $\mathcal{GI}$ $(\mathcal{GF}, \mathcal{GP})$ for the class of Gorenstein injective (Gorenstein flat, Gorenstein projective respectively) $R$-modules, $\mathcal{DI}$ for Ding injective modules, and $\mathcal{PGF}$ for projectively coresolved Gorenstein flat modules (see section 2 for definitions). We prove that the following are equivalent over any ring $R$:
\begin{enumerate}
\item Every exact complex of injective modules is totally acyclic.
\item Every exact complex of Gorenstein injective modules is in $\widetilde{\mathcal{GI}}$.
\item Every complex in $dw(\mathcal{GI})$ is dg-Gorenstein injective.
\end{enumerate}
Then we show that the analogue result for complexes of flat and Gorenstein flat modules also holds over arbitrary rings.
We also prove that if moreover, the ring is $n$-perfect for some integer $n \ge 0$, then the three equivalent statements for flat and Gorenstein flat modules are also equivalent with their counterparts for projective and projectively coresolved Gorenstein flat modules.
We also prove the following characterization of Gorenstein rings:
Let $R$ be a commutative coherent ring. The following statements are equivalent:
(1) every exact complex of FP-injective modules has all its cycles Ding injective modules.
(2) every exact complex of flat modules is F-totally acyclic, and every $R$-module M such that $M^+$ is Gorenstein flat is Ding injective.
(3) every exact complex of injectives has all its cycles Ding injective modules
and every $R$-module M such that $M^+$ is Gorenstein flat is Ding injective.
If moreover the ring $R$ has finite Krull dimension then statements (1), (2)
and (3) above are also equivalent to
(4) $R$ is a Gorenstein ring (in the sense of Iwanaga).
%In the second part of the paper we focus on two sided noetherian rings that satisfy the Auslander condition. We prove (Theorem 6) that for such a ring $R$ that also has finite finitistic flat dimension, every complex of injective (left and respectively right) $R$-modules is totally acyclic if and only if $R$ is a Gorenstein ring.
\end{abstract}
\maketitle

\section{introduction}

Gorenstein homological algebra is the relative version of homological algebra that uses Gorenstein projective, Gorenstein injective and Gorenstein flat resolutions instead of the classical projective, injective and flat resolutions. The Gorenstein methods have proved to be very useful in commutative and non commutative algebra, as well as in representation theory and model category theory. This is the reason why the existence of the Gorenstein resolutions, and the properties of the Gorenstein modules have been studied intensively in recent years. The Gorenstein injective, projective, respectively flat modules are defined in terms of totally acyclic (respectively F-totally acyclic) complexes. We recall that an exact complex of injective (projective respectively) modules is called totally acyclic if it stays exact when applying a functor $\mathrm{Hom}(A,-)$, with $A$ injective, to it (respectively when applying to the complex a functor $\mathrm{Hom}(-,P)$ with $P$ any projective module). And, an exact complex of flat modules is called F-totally acyclic if it stays exact when tensoring it with any injective module. A module is said to be Gorenstein injective (Gorenstein projective) if it is a cycle in a totally acyclic complex of injective (projective) modules. The Gorenstein flat modules are the cycles of F-totally acyclic complexes.

By a Gorenstein ring we mean Gorenstein in the sense of Iwanaga (i.e. a two sided noetherian ring of finite self injective dimension on both sides). It is known that over a Gorenstein ring every exact complex of injective (projective, respectively) modules is totally acyclic, and every exact complex of flat modules is F-totally acyclic (see Section 2 for details). So, over such a ring, the Gorenstein injective (projective, flat respectively) modules are simply the cycles of the exact complexes of injective (projective, flat respectively) modules. Thus, it is a natural question to consider whether or not these conditions characterize Gorenstein rings, and, more generally, if it is possible to characterize Gorenstein rings in terms of exact complexes of Gorenstein injective, Gorenstein projective, Gorenstein flat modules. We considered first these questions in \cite{iacob:17:totally}, over noetherian rings only. We extend here some of the results of \cite{iacob:17:totally} to arbitrary rings. We prove, for instance (Theorem 4), that over any ring $R$, the following statements are equivalent:
\begin{enumerate}
\item Every exact complex of injective modules is totally acyclic.
\item Every exact complex of Gorenstein injective modules is in $\widetilde{\mathcal{GI}}$ (see section 2 for definitions and notations).
\item Every complex of Gorenstein injective modules is a dg-Gorenstein injective complex.
\end{enumerate}
Theorem 7 shows that the dual results for complexes of flat and Gorenstein flat modules also holds over any ring. If, moreover, the ring is left $n$-perfect for some nonnegative integer $n$, then the three statements from Theorem 7 are also equivalent with their counterparts for complexes of projective and $\mathcal{PGF}$ modules (this is Theorem 12). The projectively coresolved Gorenstein flat modules ($\mathcal{PGF}$ modules, for short) mentioned here were introduced by Saroch and Stovicek in \cite{saroch:18:gor}; they are the cycles of exact complexes of projective modules that stay exact when tensored with any injective module (see section 2 for details).\\
%(1) every exact complex of flat modules is F-totally acyclic.\\
%(2) every exact complex of Gorenstein flat modules is in $\widetilde{\mathcal{GF}}$.\\
%(3) every complex of Gorenstein flat modules is a dg Gorenstein flat complex.\\
%(4) every exact complex of projective modules has all its cycles projectively resolved Gorenstein flat modules.\\
%(5) every exact complex of projectively resolved Gorenstein flat modules is in  $\widetilde{\mathcal{PGF}}$.\\
%(6) every complex of $\mathcal{PGF}$ modules is a dg $\mathcal{PGF}$ complex.

It is known (\cite{iacob:17:totally}, Corollary 2) that if the ring $R$ is commutative noetherian of finite Krull dimension then the three equivalent conditions in Theorem 4 are equivalent to those in Theorem 7 as well as with their projective/Gorenstein projective counterparts, and they characterize Gorenstein rings. Consequently, when $R$ is commutative noetherian of finite Krull dimension, the statements from Theorem 4 and those from Theorem 12 are all equivalent and they hold if and only if the ring is Gorenstein.

We also consider here a generalization of the class of Gorenstein injective modules - the Ding injective modules. They are the cycles of exact complexes of injective modules that remain exact when applying a functor $\mathrm{Hom}(A,-)$ for any FP-injective module $A$. We prove (Theorem 16) that over a coherent ring $R$ the following statements are equivalent ($n < \infty$ is a non negative integer):
\begin{enumerate}
\item Every exact complex of FP-injective modules has all its cycles of Ding injective dimension at most $n$.
\item Every exact complex of injective modules has all its cycles Ding injective modules, and, if $M^+$ is Gorenstein flat, then the module $M$ has Ding injective dimension at most $n$.
\item Every exact complex of flat modules is F-totally acyclic, and, if $M^+$ is a Gorenstein flat module, then the module $M$ has Ding injective dimension at most $n$.
\end{enumerate}

We end with the following result, which gives, in particular, a characterization of Gorenstein rings (Theorem 17):\\
Let $R$ be a commutative coherent ring. The following statements are equivalent:
\begin{enumerate}
\item Every exact complex of FP-injective modules has all its cycles Ding injective modules.
\item Every exact complex of injectives has all its cycles Ding injective modules and if $M^+$ is a Gorenstein flat module, then $M$ is Ding injective.
\item Every exact complex of flat modules is F-totally acyclic, and if $M^+$ is a Gorenstein flat module then $M$ is a Ding injective module.
\end{enumerate}
If moreover the ring $R$ has finite Krull dimension then statements (1), (2) and (3) above are also equivalent to:\\
(4) $R$ is a Gorenstein ring.

\section{preliminaries}

Throughout the paper $R$ will denote an associative ring with unit (non necessarily commutative). The category of left $R$-modules will be denoted by $R\textrm{-Mod}$, and the category of unbounded complexes of left $R$-modules will be denoted by $Ch(R)$. Modules are, unless otherwise explicitly stated, left modules.

We recall that an acyclic complex of projective modules $$P = \ldots \rightarrow P_1 \rightarrow P_0 \rightarrow P_{-1} \rightarrow \ldots $$ is said to be \emph{totally acyclic } if for any projective $R$-module $Q$, the complex $$\ldots \rightarrow \mathrm{Hom}(P_{-1}, Q) \rightarrow \mathrm{Hom}(P_0, Q) \rightarrow \mathrm{Hom}(P_1, Q) \rightarrow \ldots$$ is still acyclic.

A module $G$ is Gorenstein projective if it is a cycle of such a totally acyclic complex of projective modules ($G = Z_j(P)$, for some integer $j$).\\

Dually, an acyclic complex of injective modules $$I= \ldots \rightarrow I_1 \rightarrow I_0 \rightarrow I_{-1} \rightarrow \ldots $$ is said to be \emph{totally acyclic } if for any injective $R$-module $E$, the complex $$\ldots \rightarrow \mathrm{Hom}(E, I_1) \rightarrow \mathrm{Hom}(E, I_0) \rightarrow \mathrm{Hom}(E, I_{-1}) \rightarrow \ldots$$ is still acyclic.

A module $M$ is Gorenstein injective if it is a cycle in a totally acyclic complex of injective modules ($M =  Z_j(I)$ for some integer $j$).\\

We also recall that an acyclic complex of flat left $R$-modules, $$F = \ldots \rightarrow F_1 \rightarrow F_0 \rightarrow F_{-1} \rightarrow \ldots $$ is said to be \emph{F-totally acyclic} if for any injective right $R$-module $I$, the complex $$\ldots \rightarrow I \otimes F_1 \rightarrow I \otimes I_0 \rightarrow I \otimes F_{-1} \rightarrow \ldots$$ is still acyclic.\\
A module $H$ is Gorenstein flat if it is a cycle in an F-totally acyclic complex of flat modules ($H = Z_j(F)$ for some integer $j$).\\

\begin{remark} If $R$ is an $n$-Gorenstein ring then, by \cite{enochs:00:relative}, Theorem 12.3.1, every $n$th injective cosyzygy of an $R$-module is Gorenstein injective. So, for any acyclic complex of injective modules, $I$, using the result for its $(j-n)$th cycle, we obtain that $Z_j I$ is Gorenstein injective, for all $j$. Thus, over a Gorenstein ring $R$, every acyclic complex of injective modules is totally acyclic.\\
The same type of argument and another use of \cite{enochs:00:relative}, Theorem 12.3.1, for projective (flat respectively) syzygies, gives that, over a Gorenstein ring, every acyclic complex of projective (flat respectively) modules is totally acyclic (F-totally acyclic respectively).
\end{remark}

The Ding injective modules are also defined in terms of exact complexes of injectives. More precisely, a module $D$ is called \emph{Ding injective} if it is a cycle of an acyclic complex of injective modules $$I= \ldots \rightarrow I_1 \rightarrow I_0 \rightarrow I_{-1} \rightarrow \ldots $$ such that for any FP-injective $R$-module $A$, the complex $$\ldots \rightarrow \mathrm{Hom}(A, I_1) \rightarrow \mathrm{Hom}(A, I_0) \rightarrow \mathrm{Hom}(A, I_{-1}) \rightarrow \ldots$$ is still acyclic. Thus, in particular, every Ding injective module is Gorenstein injective, and, if $R$ is a noetherian ring, then the two classes coincide.

Another generalization of the Gorenstein injectives are the Gorenstein AC-injective modules. They were introduced in \cite{gillespie:14:stable}. We recall their definition.\\
First, a module $F$ is said to be \emph{of type $FP_{\infty}$} if it has a projective resolution
$$\ldots \rightarrow P_2 \rightarrow P_1 \rightarrow P_0 \rightarrow F \rightarrow 0$$ with each $P_i$ finitely generated. \\
We also recall that an $R$-module $A$ is \emph{absolutely clean} if $Ext^1(F,A) = 0$ for all $R$-modules $F$ of type $FP_{\infty}$.\\

An $R$-module $M$ is called \emph{Gorenstein AC-injective} if there exists an exact complex of injective modules
$$I = \ldots \rightarrow I_1 \rightarrow I_0 \rightarrow I_{-1} \rightarrow \ldots$$
with $M = Ker(I_0 \rightarrow I_{-1})$ and such that the complex $Hom(A,I)$ is still exact for any absolutely clean module $A$.

The projectively coresolved Gorenstein flat modules ($\mathcal{PGF}$ modules, for short) were recently introduced (in \cite{saroch:18:gor}) by Saroch and Stovicek. A module $M$ is in the class $\mathcal{PGF}$ if it is a cycle of an acyclic complex $$\ldots \rightarrow P_1 \rightarrow P_0 \rightarrow P_{-1} \rightarrow \ldots$$ consisting of projective modules which remains exact after tensoring by arbitrary injective $R$-modules.

Throughout the paper we will use the notation $Proj$ ($Inj$ respectively) for the class of projective (injective respectively) modules. The class of Gorenstein projective (Gorenstein injective, Gorenstein flat respectively) modules is denoted $\mathcal{GP}$ ($\mathcal{GI}$, $\mathcal{GF}$ respectively). The class of Ding injective modules is denoted $\mathcal{DI}$, while the class of Gorenstein AC-injective modules is denoted $\mathcal{GI}_{ac}$.

A pair of classes $(\mathcal{A}, \mathcal{B})$ in an abelian category $\mathcal D$ is called a {\it cotorsion pair} if $\mathcal{B}=\mathcal{A}^{\perp}$ and $\mathcal{A}=^{\perp}\!\!\! \mathcal{B}$, where for a given class of objects $\mathcal C$, the right orthogonal class $\mathcal C^{\perp}$ is defined to be the class of objects $Y$ in $\mathcal D$ such that $\mathrm{Ext}^1(A,Y)=0$ for all $A\in \mathcal{C}$. Similarly, we define the left orthogonal class $^{\perp}{\mathcal C}$ to be the class of objects $L$ such that $\mathrm{Ext}^1(L,C)=0$ for all $C \in \mathcal{C}$. The cotorsion pair is called {\it hereditary} if $\mathrm{Ext}^i(A,B)=0$ for all $A\in\mathcal A$, $B\in \mathcal B$, and $i\geq 1$. The cotorsion pair is called {\it complete} if for each object $M$ in $\mathcal D$ there exist short exact sequences $0\to M\to B\to A\to 0$ and $0\to B'\to A'\to M\to 0$ with $A,A'\in \mathcal A$ and $B,B'\in \mathcal B$.

Following Gillespie \cite[Definition 3.3 and Proposition 3.6]{Gill04} there are four classes of complexes in $Ch(R)$ that are associated with a cotorsion pair $(\mathcal{A}, \mathcal{B})$ in $R$-Mod:
\begin{enumerate}
\item An acyclic complex $X$ is an \emph{$\mathcal{A}$-complex} if $Z_j(X) \in \mathcal{A}$ for all integers $j$. We denote by $\widetilde{\mathcal{A}}$ the class of all acyclic $\mathcal A$-complexes.
\item An acyclic complex $U$ is a \emph{$\mathcal{B}$-complex} if $Z_j(X) \in \mathcal{B}$ for all integers $j$. We denote by $\widetilde{\mathcal{B}}$ the class of all acyclic $\mathcal B$-complexes.
\item A complex $Y$ is a \emph{dg-$\mathcal{A}$ complex} if each $Y_n \in \mathcal{A}$ and each map $Y\to U$ is null-homotopic, for each complex $U\in \widetilde{\mathcal{B}}$. We denote by $ dg (\mathcal{A})$ the class of all dg-$\mathcal{A}$ complexes.
\item A complex $W$ is a \emph{dg-$\mathcal{B}$ complex} if each $W_n \in \mathcal{B}$ and each map $V\to W$ is null-homotopic, for each complex $V\in \widetilde{\mathcal{A}}$. We denote by $ dg (\mathcal{B})$ the class of all dg-$\mathcal{B}$ complexes.
\end{enumerate}

Yang and Liu showed in \cite[Theorem 3.5]{yang:11:cotorsion} that when $(\mathcal{A}, \mathcal{B})$ is a complete hereditary cotorsion pair in $R$-Mod, the pairs $(dg (\mathcal{A}), \widetilde{\mathcal{B}})$ and $(\widetilde{\mathcal{A}}, dg (\mathcal{B}))$ are complete (and hereditary) cotorsion pairs. Moreover, by Gillespie \cite[Theorem 3.12]{Gill04}, we have that  $\widetilde{\mathcal{A}}=dg (\mathcal{A})\cap \mathcal E$ and $ \widetilde{\mathcal{B}}=dg (\mathcal{B})\cap \mathcal E$ (where $\mathcal E$ is the class of all acyclic complexes). For example, from the (complete and hereditary) cotorsion pairs $(Proj,R\textrm{-Mod})$ and $(R\textrm{-Mod},Inj)$ one obtains the standard (complete and hereditary) cotorsion pairs $(\mathcal E,dg(Inj))$ and $(dg(Proj),\mathcal E)$. Over any ring $R$, the pair $(^\bot \mathcal{GI}, \mathcal{GI})$ is a complete hereditary cotorsion pair. This is due to Saroch and Stovicek (\cite{saroch:18:gor}. Therefore $(dg (^\bot \mathcal {GI}), \widetilde{\mathcal {GI}})$ is a complete cotorsion pair in $Ch(R)$. \\

%We recall that a ring $R$ is right $n$-perfect if each flat right $R$-module has projective dimension $\leq n$. Then if $R$ is left coherent and right $n$-perfect the pair $(\mathcal{GP}, \mathcal{GP}^\bot)$ is also a complete hereditary cotorsion pair in the category of right $R$-modules (see Bravo, Gillespie and Hovey \cite[Proposition 8.10]{gillespie:14:stable} or Estrada, Iacob and Odaba\c s\i\ \cite[Propostion 7]{iacob:15:gor.flat.proj}). Hence the pair $(\widetilde{\mathcal{GP}}, dg (\mathcal{GP}^\bot))$ is a complete cotorsion pair. \\
Also by Saroch and Stovicek (\cite{saroch:18:gor}), over any ring $R$, $(\mathcal{GF},\mathcal{GF}^{\perp})$ is a complete hereditary cotorsion pair. The class $\mathcal{GF}^{\perp}$ is known as the class of {\it Gorenstein cotorsion modules} and it is usually denoted by $\mathcal{GC}$. So the pair $(\widetilde{\mathcal{GF}}, dg (\mathcal{GC}))$ is a complete cotorsion pair.\\ And, also by \cite{saroch:18:gor}, over any ring $R$, the class of projectively coresolved Gorenstein flat modules is the left half of a complete hereditary cotorsion pair. Hence the pair $(\widetilde{\mathcal{PGF}}, dg (\mathcal{PGF}^\bot))$ is a complete cotorsion pair.

We recall that, given a class of modules $\mathcal{A}$, $dw(\mathcal{A})$ denotes the class of complexes of modules, $X$, such that each component, $X_n$, is in $\mathcal{A}$.\\

\section{main results}
We start with the following lemma.
\begin{lemma}\label{lemma1}
Let $(\mathcal{A}, \mathcal{B})$ be a complete hereditary cotorsion pair in $R$-Mod. The following assertions are equivalent:
\begin{enumerate}
\item Every exact complex with each component in $\mathcal A\cap \mathcal B$ has cycles in $\mathcal B$.
\item Every exact complex in $dw(\mathcal{B})$ is in $\widetilde{\mathcal B}$.
\item Every complex in $dw(\mathcal{B})$ is in $dg(\mathcal B)$.
\end{enumerate}
\end{lemma}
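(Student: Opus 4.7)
The plan is to establish the equivalences via $(2) \Leftrightarrow (1)$ and $(2) \Leftrightarrow (3)$, using the two complete hereditary cotorsion pairs $(dg(\mathcal{A}), \widetilde{\mathcal{B}})$ and $(\widetilde{\mathcal{A}}, dg(\mathcal{B}))$ in $Ch(R)$ recalled in Section 2. Two of the four implications are formal: $(2) \Rightarrow (1)$ follows from the inclusion $\mathcal{A} \cap \mathcal{B} \subseteq \mathcal{B}$, and $(3) \Rightarrow (2)$ follows from Gillespie's identity $\widetilde{\mathcal{B}} = dg(\mathcal{B}) \cap \mathcal{E}$.

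For $(1) \Rightarrow (2)$, the plan is to start from an exact complex $X \in dw(\mathcal{B})$ and form a special $dg(\mathcal{A})$-precover in $Ch(R)$, producing a short exact sequence of complexes $0 \to B \to A \to X \to 0$ with $A \in dg(\mathcal{A})$ and $B \in \widetilde{\mathcal{B}}$. Acyclicity of $B$ and $X$ forces $A$ to be acyclic, so $A \in dg(\mathcal{A}) \cap \mathcal{E} = \widetilde{\mathcal{A}}$. A componentwise argument using that $\mathcal{B}$ is closed under extensions then places $A_n \in \mathcal{A} \cap \mathcal{B}$ for every $n$. Hypothesis (1) yields $Z_n(A) \in \mathcal{B}$, and the induced short exact sequence of cycles $0 \to Z_n(B) \to Z_n(A) \to Z_n(X) \to 0$, combined with the heredity of the cotorsion pair (so that $\mathrm{Ext}^2(\mathcal{A}, Z_n(B)) = 0$), delivers $Z_n(X) \in \mathcal{B}$.

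For $(2) \Rightarrow (3)$, take $W \in dw(\mathcal{B})$ and form a special $dg(\mathcal{B})$-preenvelope $0 \to W \to Y \to A \to 0$ with $Y \in dg(\mathcal{B})$ and $A \in \widetilde{\mathcal{A}}$. The same extension/heredity argument puts each $A_n$ in $\mathcal{A} \cap \mathcal{B}$, so $A$ is an exact complex in $dw(\mathcal{B})$; hypothesis (2) then forces $A \in \widetilde{\mathcal{B}}$, so $Z_n(A) \in \mathcal{A} \cap \mathcal{B}$. Because $\mathrm{Ext}^1_R(\mathcal{A}, \mathcal{B}) = 0$, the defining short exact sequences $0 \to Z_n(A) \to A_n \to Z_{n-1}(A) \to 0$ all split, yielding a decomposition $A \cong \bigoplus_n D^n(Z_{n-1}(A))$ as a direct sum of disk complexes with terms in $\mathcal{A} \cap \mathcal{B}$. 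Using the standard identification $\mathrm{Ext}^1_{Ch(R)}(D^n(M), W) \cong \mathrm{Ext}^1_R(M, W_n)$ together with the cotorsion pair, one obtains $\mathrm{Ext}^1_{Ch(R)}(A, W) = 0$, so the preenvelope splits. Thus $W$ is a direct summand of $Y \in dg(\mathcal{B})$, and since $dg(\mathcal{B})$ is closed under direct summands, $W \in dg(\mathcal{B})$.

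The main obstacle is the splitting of the preenvelope in $(2) \Rightarrow (3)$: it is not automatic, and the structural observation is that, once hypothesis (2) forces the cycles of $A$ into $\mathcal{A} \cap \mathcal{B}$, the complex $A$ decomposes as a direct sum of disks. Only then does the Ext computation reduce to the elementary vanishing $\mathrm{Ext}^1_R(\mathcal{A} \cap \mathcal{B}, \mathcal{B}) = 0$ furnished by the cotorsion pair.
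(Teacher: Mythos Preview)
Your argument is correct. The overall architecture matches the paper's (same cycle of implications, same use of the induced cotorsion pairs in $Ch(R)$), but two steps are organized differently.

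For $(1)\Rightarrow(2)$ the paper uses a special $dg(\mathcal{B})$-\emph{preenvelope} $0\to B\to T\to L\to 0$ coming from $(\widetilde{\mathcal{A}},dg(\mathcal{B}))$, forces $L$ to have components in $\mathcal{A}\cap\mathcal{B}$, applies (1) to make $L$ contractible, and then finishes with a hands-on lifting argument through the split sequences $0\to Z_{n+1}L\to L_{n+1}\to Z_nL\to 0$ and $0\to B_{n+1}\to T_{n+1}\to L_{n+1}\to 0$. You instead take a special $dg(\mathcal{A})$-\emph{precover} from the other induced pair $(dg(\mathcal{A}),\widetilde{\mathcal{B}})$, which places the ``auxiliary'' complex $A$ in $dw(\mathcal{A}\cap\mathcal{B})$ and lets you read off $Z_n(X)\in\mathcal{B}$ directly from the long exact $\mathrm{Ext}$ sequence attached to $0\to Z_nB\to Z_nA\to Z_nX\to 0$, using heredity for the $\mathrm{Ext}^2$ term. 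This is a clean mirror of the paper's strategy and arguably slightly more streamlined, since it avoids the explicit lifting.

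For $(2)\Rightarrow(3)$ both you and the paper take the same preenvelope $0\to W\to Y\to A\to 0$ and both reduce to $A\in\widetilde{\mathcal{A}}\cap\widetilde{\mathcal{B}}$, hence contractible. The paper then simply observes that $W\to Y$ is a homotopy equivalence (the sequence is degreewise split because $A_n\in\mathcal{A}$ and $W_n\in\mathcal{B}$), so null-homotopy of maps $V\to Y$ transfers to $V\to W$. Your route---decomposing $A$ as $\bigoplus_n D^n(Z_{n-1}A)$ and computing $\mathrm{Ext}^1_{Ch(R)}(A,W)=0$ to split the sequence in $Ch(R)$---reaches the same endpoint via the summand-closure of $dg(\mathcal{B})$; it is a bit more explicit but essentially the same idea unpacked.
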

\begin{proof}
(1) $\Rightarrow$ (2) Let $B\in dw(\mathcal B)\cap \mathcal E$. We have to prove that each $Z_n B$ is in $\mathcal B$. Equivalently $\mathrm{Ext}^1_R(A,Z_nB)=0$, for each $A\in \mathcal A$. Since $(\widetilde{\mathcal{A}},dg(\mathcal B))$ is a complete cotorsion pair in $Ch(R)$, there exists a short exact sequence $$0\to B\to T\to L\to 0, $$ with $T\in dg(\mathcal{B})$ and $L\in \widetilde{\mathcal{A}}$. Since $B$ and $L$ are exact complexes, it follows that $T$ is also exact. Hence $T\in \widetilde{\mathcal{B}}$. Moreover, since each $B_n$ and $T_n$ are in $\mathcal{B}$ and the cotorsion pair $(\mathcal{A},\mathcal B)$ is hereditary, we get that $L_n\in \mathcal B$. Therefore $L\in \widetilde{\mathcal{A}}\cap dw(\mathcal{B})$. But now, condition $(1)$ tells us that $$\widetilde{\mathcal{A}}\cap dw(\mathcal{B})=\widetilde{\mathcal{A}}\cap \mathcal{E}\cap dw(\mathcal{B})=\widetilde{\mathcal{A}}\cap\widetilde{\mathcal{B}}, $$ i.e. $L$ is a contractible complex with components in $\mathcal{A}\cap\mathcal{B}$.\\ Now, since $B$ is an exact complex, for each $n\in \mathbb{Z}$, we have a short exact sequence $$0\to Z_nB\to Z_nT\to Z_nL\to 0. $$ Let us fix $A\in \mathcal A$. Since $T\in \widetilde{\mathcal B}$ (so $Z_nT$ is in particular in $\mathcal B$) we have the exact sequence of abelian groups $$\mathrm{Hom}(A,Z_nT)\to \mathrm{Hom}(A,Z_nL)\to \mathrm{Ext}^1(A,Z_nB)\to \mathrm{Ext}^1(A,Z_nT)=0. $$ So to prove $\mathrm{Ext}^1(A,Z_nB)=0$, it suffices to show that every map $A\to Z_nL$ can be lifted to a map $A\to Z_nT$. But this follows easily, since the  short exact sequences
$$0\to Z_{n+1}L\to L_{n+1}\to Z_n L\to 0 $$ and $$0\to B_{n+1}\to T_{n+1}\to L_{n+1}\to 0$$ are both split exact.\\
(2) $\Rightarrow$ (3) Let $B\in dw(\mathcal B)$. Let us fix a complex $V\in \widetilde{\mathcal A}$. We have to prove that each map $V\to B$ is null homotopic. Since $(\widetilde{\mathcal{A}},dg(\mathcal B))$ is a complete cotorsion pair in $Ch(R)$, there exists a short exact sequence $$0\to B\to T\to L\to 0, $$ with $T\in dg(\mathcal{B})$ and $L\in \widetilde{\mathcal{A}}$. In particular, every map $V\to T$ is null-homotopic. Since $(\mathcal{A},\mathcal B)$ is a hereditary cotorsion pair and $B_n,T_n\in \mathcal{B}$, we get that $L\in \widetilde{\mathcal{A}}\cap dw(\mathcal{B})$. Now condition (2) tells us that $L\in \widetilde{\mathcal A}\cap \widetilde{\mathcal B}$, i.e. $L$ is a contractible complex with components in $\mathcal{A}\cap\mathcal{B}$. But then $B\to T$ is a homotopy isomorphism, and so every map $V\to B$ is also null-homotopic. Hence $B\in dg(\mathcal B)$.\\
(3) $\Rightarrow$ (1) This is immediate, because the equality $dg(\mathcal{B})\cap \mathcal{E}=\widetilde{\mathcal{B}}$ always holds.
\end{proof}
We have the corresponding dual version of the previous lemma:
\begin{lemma}\label{lemma1dual}
Let $(\mathcal{A}, \mathcal{B})$ be a complete hereditary cotorsion pair in $R$-Mod. The following assertions are equivalent:
\begin{enumerate}
\item Every exact complex with each component in $\mathcal A\cap \mathcal B$ has cycles in $\mathcal A$.
\item Every exact complex in $dw(\mathcal{A})$ is in $\widetilde{\mathcal A}$.
\item Every complex in $dw(\mathcal{A})$ is in $dg(\mathcal A)$.
\end{enumerate}
\end{lemma}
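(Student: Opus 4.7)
The plan is to dualize the proof of Lemma \ref{lemma1} step by step, swapping the roles of $\mathcal{A}$ and $\mathcal{B}$ and replacing the cotorsion pair $(\widetilde{\mathcal{A}}, dg(\mathcal{B}))$ on $Ch(R)$ by $(dg(\mathcal{A}), \widetilde{\mathcal{B}})$. The backbone of all three implications is the approximation sequence supplied by completeness of the latter pair: for any complex $A$ there is a short exact sequence
$$0 \to L \to T \to A \to 0$$
with $T \in dg(\mathcal{A})$ and $L \in \widetilde{\mathcal{B}}$.

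For (1) $\Rightarrow$ (2), I would take $A \in dw(\mathcal{A}) \cap \mathcal{E}$ and form the above approximation. Exactness of $A$ and $L$ forces exactness of $T$, so $T \in dg(\mathcal{A}) \cap \mathcal{E} = \widetilde{\mathcal{A}}$. The componentwise sequence $0 \to L_n \to T_n \to A_n \to 0$ together with hereditariness of $(\mathcal{A}, \mathcal{B})$ places $L_n$ in $\mathcal{A}$, so $L \in \widetilde{\mathcal{B}} \cap dw(\mathcal{A})$ with all components in $\mathcal{A} \cap \mathcal{B}$. Hypothesis (1) then puts the cycles of $L$ in $\mathcal{A}$, so $L \in \widetilde{\mathcal{A}} \cap \widetilde{\mathcal{B}}$, meaning $L$ is contractible with components in $\mathcal{A} \cap \mathcal{B}$. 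Fixing $B \in \mathcal{B}$ and applying $\mathrm{Hom}(-, B)$ to the cycle sequence $0 \to Z_n L \to Z_n T \to Z_n A \to 0$, the vanishing $\mathrm{Ext}^1(Z_n T, B) = 0$ (because $Z_n T \in \mathcal{A}$) reduces the goal $\mathrm{Ext}^1(Z_n A, B) = 0$ to extending each map $Z_n L \to B$ across $Z_n L \hookrightarrow Z_n T$. This extension is immediate from the split short exact sequences $0 \to Z_n L \to L_n \to Z_{n-1} L \to 0$ (split by contractibility of $L$) and $0 \to L_n \to T_n \to A_n \to 0$ (split since $A_n \in \mathcal{A}$ and $L_n \in \mathcal{B}$), exactly as in the original lemma.

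For (2) $\Rightarrow$ (3), I would take $A \in dw(\mathcal{A})$ and any test complex $U \in \widetilde{\mathcal{B}}$, form the same approximation, and observe that the composed map $T \to U$ is null-homotopic because $T \in dg(\mathcal{A})$. The same components/hereditariness analysis gives $L \in dw(\mathcal{A}) \cap \widetilde{\mathcal{B}}$; now hypothesis (2) directly yields $L \in \widetilde{\mathcal{A}}$, so $L$ is contractible. Consequently the approximation splits in $Ch(R)$ and $T \to A$ is a homotopy equivalence, so any $A \to U$ coincides, up to homotopy, with its precomposition with a homotopy inverse $A \to T$ followed by the null-homotopic map $T \to U$, and is therefore itself null-homotopic. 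Finally, (3) $\Rightarrow$ (1) is immediate from the standing identity $dg(\mathcal{A}) \cap \mathcal{E} = \widetilde{\mathcal{A}}$.

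The argument is entirely formal once the correct approximation is chosen; the only delicate point I anticipate is the bookkeeping required to locate $L$ in $dw(\mathcal{A}) \cap \widetilde{\mathcal{B}}$, which is precisely where hereditariness of the module-level cotorsion pair plays its decisive role.
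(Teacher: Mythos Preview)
Your proposal is correct and is precisely the approach the paper intends: the paper gives no separate proof of this lemma, merely stating it as ``the corresponding dual version'' of Lemma~\ref{lemma1}, and your argument is exactly the expected dualization (swapping $(\widetilde{\mathcal{A}},dg(\mathcal{B}))$ for $(dg(\mathcal{A}),\widetilde{\mathcal{B}})$, reversing the approximation sequence, and replacing lifting by extension). The only cosmetic point is that in (2)$\Rightarrow$(3) you need not claim the sequence splits in $Ch(R)$; it suffices (as in the paper's proof of Lemma~\ref{lemma1}) that $T\to A$ is a homotopy equivalence because $L$ is contractible and the sequence is degreewise split---though in fact your stronger claim also holds, since any chain map into the contractible complex $L[-1]$ is null-homotopic.
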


Theorem 4 below gives equivalent characterizations of the condition that every exact complex of injective $R$-modules is totally acyclic.\\

\begin{theorem}
Let $R$ be any ring. The following are equivalent:
\begin{enumerate}
\item Every exact complex of injective $R$-modules is totally acyclic.
\item Every exact complex of Gorenstein injective $R$-modules is in $\widetilde{\mathcal{GI}}$.
\item Every complex of Gorenstein injective $R$-modules is a dg-Gorenstein injective complex.
\end{enumerate}
\end{theorem}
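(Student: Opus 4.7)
The plan is to apply Lemma~\ref{lemma1} to the cotorsion pair $({}^\perp\mathcal{GI},\mathcal{GI})$, which, by the result of Saroch--Stovicek recalled in Section 2, is a complete hereditary cotorsion pair over any ring $R$. Once this is in hand, conditions (2) and (3) of the theorem are read off verbatim from conditions (2) and (3) of the lemma: an exact complex of Gorenstein injective modules is precisely an exact complex in $dw(\mathcal{GI})$, and the dg-Gorenstein injective complexes are by definition the complexes in $dg(\mathcal{GI})$. What must actually be established is that condition (1) of the theorem matches condition (1) of Lemma~\ref{lemma1} for this particular pair.

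The key point is the identification ${}^\perp\mathcal{GI}\cap\mathcal{GI}=Inj$. For the inclusion $Inj\subseteq{}^\perp\mathcal{GI}\cap\mathcal{GI}$, injective modules are certainly Gorenstein injective, and the vanishing $\mathrm{Ext}^1(E,G)=0$ for $E$ injective and $G$ Gorenstein injective follows by embedding $G$ into the injective term $I_0$ of a totally acyclic complex witnessing $G\in\mathcal{GI}$ and exploiting the exactness of $\mathrm{Hom}(E,I_\bullet)$ to force the connecting map in the long exact sequence of $0\to G\to I_0\to Z_{-1}(I)\to 0$ to vanish. For the reverse inclusion, a module $G$ in the intersection sits in a short exact sequence $0\to K\to I\to G\to 0$ with $I$ injective and $K$ Gorenstein injective (extracted from a totally acyclic resolution of $G$); the hypothesis $G\in{}^\perp\mathcal{GI}$ together with $K\in\mathcal{GI}$ gives $\mathrm{Ext}^1(G,K)=0$, so the sequence splits and $G$ is a direct summand of $I$, hence injective.

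With this equality established, condition (1) of Lemma~\ref{lemma1} reads: every exact complex of injective modules has all cycles Gorenstein injective. This is in turn equivalent to the statement that every exact complex of injective modules is totally acyclic: if the cycles are Gorenstein injective, then for any injective $E$ each short exact sequence $0\to Z_{n+1}(I)\to I_{n+1}\to Z_{n}(I)\to 0$ remains exact after $\mathrm{Hom}(E,-)$ (thanks to the vanishing of $\mathrm{Ext}^1(E,Z_{n+1}(I))$ just proved), and these short exact sequences splice back into exactness of $\mathrm{Hom}(E,I_\bullet)$; the converse is the very definition of Gorenstein injective. The main obstacle is really only the identification ${}^\perp\mathcal{GI}\cap\mathcal{GI}=Inj$; once that is secured, the three equivalences (1)$\Leftrightarrow$(2)$\Leftrightarrow$(3) of the theorem are a direct application of Lemma~\ref{lemma1}.
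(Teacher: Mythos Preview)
Your proposal is correct and follows exactly the paper's approach: apply Lemma~\ref{lemma1} to the complete hereditary cotorsion pair $({}^\perp\mathcal{GI},\mathcal{GI})$ and use ${}^\perp\mathcal{GI}\cap\mathcal{GI}=Inj$. You simply spell out two points the paper leaves implicit, namely the proof of this intersection identity and the equivalence between ``every exact complex of injectives is totally acyclic'' and ``every exact complex of injectives has Gorenstein injective cycles''.
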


\begin{proof}
This follows by applying Lemma \ref{lemma1} to the complete hereditary cotorsion pair $(^\bot \mathcal{GI}, \mathcal{GI})$, taking into account that
$^\bot \mathcal{GI}\cap \mathcal{GI}$ is just the class of all injective modules.
\end{proof}

Bravo, Gillespie and Hovey proved (\cite{gillespie:14:stable}, proof of Theorem 5.5) that over any ring $R$, the class of Gorenstein AC-injective modules form the left half of a complete hereditary cotorsion pair, $(^\bot \mathcal{GI}_{ac}, \mathcal{GI}_{ac})$. Then a similar argument as above gives the following:\\

\begin{theorem}
Let $R$ be any ring. The following are equivalent:
\begin{enumerate}
\item Every exact complex of injective $R$-modules has all its cycles Gorenstein AC-injective modules.
\item Every exact complex of Gorenstein AC-injective $R$-modules is in $\widetilde{\mathcal{GI}_{ac}}$.
\item Every complex of Gorenstein AC-injective $R$-modules is a dg-Gorenstein AC-injective complex.
\end{enumerate}
\end{theorem}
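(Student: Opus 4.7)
The plan is to mimic the proof of Theorem 4, applying Lemma \ref{lemma1} to the complete hereditary cotorsion pair $({}^\bot \mathcal{GI}_{ac}, \mathcal{GI}_{ac})$ constructed by Bravo, Gillespie and Hovey in the proof of \cite[Theorem 5.5]{gillespie:14:stable}. Setting $\mathcal{A} = {}^\bot \mathcal{GI}_{ac}$ and $\mathcal{B} = \mathcal{GI}_{ac}$, conditions (2) and (3) of Lemma \ref{lemma1} coincide verbatim with conditions (2) and (3) of the theorem, while condition (1) of the lemma reads ``every exact complex with components in $\mathcal{A} \cap \mathcal{B}$ has cycles in $\mathcal{GI}_{ac}$''. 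Hence the entire argument reduces to identifying the core $\mathcal{A} \cap \mathcal{B}$ with the class $Inj$ of injective $R$-modules.

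For the inclusion $Inj \subseteq \mathcal{GI}_{ac}$, any injective $E$ is the middle cycle of the trivially exact complex of injectives $\cdots \to 0 \to E \xrightarrow{\mathrm{id}} E \to 0 \to \cdots$, which clearly remains exact after $\mathrm{Hom}(A,-)$ for any module $A$. For the less obvious inclusion $Inj \subseteq {}^\bot \mathcal{GI}_{ac}$, I would exploit the fact that every injective $E$ is absolutely clean (indeed $\mathrm{Ext}^1(F,E) = 0$ for \emph{every} $F$, in particular for $F$ of type $FP_\infty$). Hence for any Gorenstein AC-injective module $N = \mathrm{Ker}(I_0 \to I_{-1})$ with defining complex $I$, the complex $\mathrm{Hom}(E, I)$ is exact, and a short diagram chase on the short exact sequence $0 \to N \to I_0 \to Z_{-1}(I) \to 0$ shows that $\mathrm{Hom}(E,I_0) \to \mathrm{Hom}(E,Z_{-1}(I))$ is surjective; combined with $\mathrm{Ext}^1(E, I_0) = 0$ this yields $\mathrm{Ext}^1(E,N) = 0$. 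Conversely, let $M \in {}^\bot \mathcal{GI}_{ac} \cap \mathcal{GI}_{ac}$. The definition of Gorenstein AC-injectivity provides a short exact sequence $0 \to M' \to I \to M \to 0$ with $I$ injective and $M' \in \mathcal{GI}_{ac}$, and the cotorsion-pair vanishing $\mathrm{Ext}^1(M, M') = 0$ forces this sequence to split; so $M$ is a direct summand of the injective $I$, and therefore itself injective.

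Once the core is identified, Lemma \ref{lemma1} delivers the desired three-way equivalence directly. The only real point of friction I anticipate is the inclusion $Inj \subseteq {}^\bot \mathcal{GI}_{ac}$, as it is the step that genuinely uses the ``absolutely clean'' condition built into the definition of $\mathcal{GI}_{ac}$; the rest of the argument is a line-by-line transcription of the template of Theorem 4.
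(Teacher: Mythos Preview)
Your proposal is correct and follows exactly the paper's approach: the paper states just before this theorem that ``a similar argument as above gives'' the result, meaning one applies Lemma~\ref{lemma1} to the complete hereditary cotorsion pair $({}^\bot\mathcal{GI}_{ac},\mathcal{GI}_{ac})$ and uses that its core is the class of injectives. You have simply supplied the details of the core identification that the paper leaves implicit.
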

In particular, when $R$ is a coherent ring, the class of Gorenstein AC-injectives coincides with that of Ding injective modules, $\mathcal{DI}$. So we obtain the following\\
\begin{corollary}
Let $R$ be a coherent ring. The following are equivalent:
\begin{enumerate}
\item Every exact complex of injective $R$-modules has all its cycles Ding injective modules.
\item Every exact complex of Ding injective $R$-modules is in $\widetilde{\mathcal{DI}}$.
\item Every complex of Ding injective $R$-modules is a dg-Ding injective complex.
\end{enumerate}
\end{corollary}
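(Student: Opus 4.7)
The plan is to deduce the corollary as a direct specialization of the preceding theorem (the analogue for Gorenstein AC-injective modules) by invoking the class identification recorded just above its statement: when $R$ is coherent, $\mathcal{GI}_{ac}=\mathcal{DI}$. This identification is the only new ingredient required.

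Concretely, the equality $\mathcal{GI}_{ac}=\mathcal{DI}$ immediately yields ${}^{\bot}\mathcal{GI}_{ac}={}^{\bot}\mathcal{DI}$, so the complete hereditary cotorsion pair $({}^{\bot}\mathcal{GI}_{ac},\mathcal{GI}_{ac})$ of \cite{gillespie:14:stable} coincides, as a cotorsion pair in $R$-Mod, with $({}^{\bot}\mathcal{DI},\mathcal{DI})$. The associated classes of complexes therefore also coincide: $\widetilde{\mathcal{GI}_{ac}}=\widetilde{\mathcal{DI}}$ and $dg(\mathcal{GI}_{ac})=dg(\mathcal{DI})$, since each of these constructions depends only on the underlying cotorsion pair through components and cycles. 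Moreover, the intersection ${}^{\bot}\mathcal{DI}\cap\mathcal{DI}$ still equals $Inj$, which is exactly the ingredient that made the application of Lemma~\ref{lemma1} work in the proof of the preceding theorem.

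With these identifications in place, the three conditions of the preceding theorem transcribe verbatim to the three conditions of the corollary once every occurrence of $\mathcal{GI}_{ac}$ is replaced by $\mathcal{DI}$. Applying the preceding theorem to the coherent ring $R$ then yields the desired equivalence of (1), (2) and (3).

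Since the argument is a pure class-identification transport, there is no substantive new obstacle. The only point worth checking — and it is automatic from the definitions in Section~2 — is that identical cotorsion pairs produce identical $\widetilde{(\cdot)}$ and $dg(\cdot)$ classes, which in turn makes the preceding theorem applicable for $\mathcal{DI}$ in exactly the same way as for $\mathcal{GI}_{ac}$.
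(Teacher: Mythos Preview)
Your proposal is correct and follows exactly the paper's approach: the corollary is obtained from the preceding theorem on Gorenstein AC-injectives by invoking the identification $\mathcal{GI}_{ac}=\mathcal{DI}$ over a coherent ring. The paper records this in a single sentence before the corollary, while you spell out in more detail why the identification transports to the associated classes $\widetilde{(\cdot)}$ and $dg(\cdot)$, but the substance is the same.
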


The following is a consequence of Lemma \ref{lemma1dual}.
\begin{theorem}
Let $R$ be any ring. Then the following are equivalent.
\begin{enumerate}
\item Every exact complex of flat right $R$-modules is F-totally acyclic.
\item Every exact complex of cotorsion-flat right $R$-modules is F-totally acyclic
\item Every exact complex of Gorenstein flat right $R$-modules is in $\widetilde{\mathcal{GF}}$.
\item  Every complex of Gorenstein flat right $R$-modules is a dg-Gorenstein flat complex.
\end{enumerate}
\end{theorem}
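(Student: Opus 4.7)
The plan is to deduce Theorem 7 from Lemma \ref{lemma1dual}, applied to the complete hereditary cotorsion pair $(\mathcal{GF},\mathcal{GC})$ in the category of right $R$-modules, together with short arguments for $(1)\Rightarrow(2)$ and $(3)\Rightarrow(1)$.

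The key preliminary step is to identify $\mathcal{GF}\cap\mathcal{GC}$ with the class of flat cotorsion right $R$-modules. The inclusion $\mathrm{Flat}\cap\mathrm{Cot}\subseteq\mathcal{GF}\cap\mathcal{GC}$ rests on an Ext-vanishing result of Saroch--Stovicek: flat cotorsion modules lie in $\mathcal{GF}^\perp=\mathcal{GC}$, and they are of course Gorenstein flat. For the reverse inclusion, given $M\in\mathcal{GF}\cap\mathcal{GC}$, the F-totally acyclic complex of flats witnessing $M\in\mathcal{GF}$ yields a short exact sequence $0\to M\to F\to M'\to 0$ with $F$ flat and $M'\in\mathcal{GF}$; since $M\in\mathcal{GF}^\perp$, this splits, so $M$ is a direct summand of the flat module $F$ (and is cotorsion because $\mathcal{GC}\subseteq\mathrm{Cot}$). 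With this identification, Lemma \ref{lemma1dual} applied to $(\mathcal{GF},\mathcal{GC})$ directly yields $(2)\iff(3)\iff(4)$: for a complex of flats (or cotorsion-flats), having Gorenstein flat cycles is equivalent to being F-totally acyclic.

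It remains to connect $(1)$ with these conditions. The implication $(1)\Rightarrow(2)$ is immediate since cotorsion-flat modules are flat. For $(3)\Rightarrow(1)$, let $F$ be an exact complex of flat right $R$-modules; since flat modules are Gorenstein flat, $(3)$ forces each cycle $Z_nF$ to be Gorenstein flat. For any injective left $R$-module $I$, the F-totally acyclic complex witnessing $Z_nF\in\mathcal{GF}$ provides a flat resolution of $Z_nF$ whose tensor with $I$ is exact in positive degrees, so $\mathrm{Tor}_i^R(Z_nF,I)=0$ for $i\ge 1$. Hence each short exact sequence $0\to Z_{n+1}F\to F_n\to Z_nF\to 0$ remains exact after tensoring with $I$, so $F\otimes_R I$ is exact; that is, $F$ is F-totally acyclic. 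The main obstacle is establishing that flat cotorsion modules lie in $\mathcal{GC}$, which requires nontrivial Ext-vanishing from Saroch--Stovicek; once that is granted, the rest follows from Lemma \ref{lemma1dual} together with the Tor-vanishing of Gorenstein flat modules against injectives.
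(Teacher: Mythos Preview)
Your proposal is correct and follows essentially the same route as the paper: apply Lemma~\ref{lemma1dual} to the complete hereditary cotorsion pair $(\mathcal{GF},\mathcal{GC})$, using that $\mathcal{GF}\cap\mathcal{GC}$ is the class of flat cotorsion modules, to obtain $(2)\Leftrightarrow(3)\Leftrightarrow(4)$; then note $(1)\Rightarrow(2)$ is trivial and close the loop. The only cosmetic difference is that the paper closes via $(4)\Rightarrow(1)$, invoking $dg(\mathcal{GF})\cap\mathcal{E}=\widetilde{\mathcal{GF}}$, whereas you go through $(3)\Rightarrow(1)$ with an explicit $\mathrm{Tor}$-vanishing argument---but both amount to the same observation that an exact complex of flats with Gorenstein flat cycles is F-totally acyclic, and your added justification of $\mathcal{GF}\cap\mathcal{GC}=\mathrm{Flat}\cap\mathrm{Cot}$ simply spells out what the paper asserts without proof.
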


\begin{proof}
To get that conditions (2), (3), and (4) are equivalent, we just need to apply Lemma \ref{lemma1dual} to the complete hereditary cotorsion pair
$(\mathcal{GF},\mathcal{GC})$, by taking into account that $\mathcal{GF}\cap\mathcal{GC} $ is the class of all cotorsion-flat right $R$-modules.\\ Finally, (1) $\Rightarrow$ (2) is immediate, and (4) $\Rightarrow$ (1) follows because $dg(\mathcal{GF})\cap \mathcal E=\widetilde{\mathcal{GF}}$.
\end{proof}

%\begin{corollary}
%Let $R$ be a commutative noetherian ring. The following are equivalent:

%1. $R$ is Gorenstein.\\
%2. Every exact complex of injective $R$-modules is totally acyclic.\\
%3. Every exact complex of flat $R$-modules is F-totally acyclic.\\
%4. Every exact complex of Gorenstein flat $R$-modules is in $\widetilde{\mathcal{GF}}$.\\
%5. Every exact complex of Gorenstein injective $R$-modules is in $\widetilde{\mathcal{GI}}$.\\
%6. Every complex of Gorenstein injective $R$-modules is dg-Gorenstein injective.\\
%7. Every complex of Gorenstein flat $R$-modules is  dg-Gorenstein flat.
%\end{corollary}

%\begin{proof}
%1. $\Rightarrow$ 2. It follows from \cite[Theorem 10.1.13(1)]{enochs:00:relative}.\par\noindent
%2. $\Rightarrow$ 3. Let $F$ be an exact complex of flat modules. Then $F^+$ is an exact complex of injective modules. By %hypothesis $F^+$ is totally acyclic.
%This means that $Hom(I,F^+)$ is exact for every injective module $I$. But $Hom(I,F^+)\simeq (I\otimes F)^+$. So $(I\otimes F)^+$ is exact, which implies that $I\otimes F$ is exact, for every injective $I$. That is, $F$ is F-totally %acyclic.\par\noindent
%3. $\Rightarrow$ 1. This follows from Murfet and Salarian \cite[Theorem 4.27]{murfet:11:gor.proj}.\par\noindent
%This shows that conditions (1), (2) and (3) are equivalent.
%Finally, by Theorem 2, (3), (4) and (7) are equivalent. And by Theorem 1, (2), (5) and (6) are equivalent.
%\end{proof}

By Saroch-Stovicek's recent results, over any ring $R$ there is a complete hereditary cotorsion pair $(\mathcal{PGF}, \mathcal{PGF}^\bot)$, with $\mathcal{PGF}\cap \mathcal{PGF}^\bot$ the class of all projective modules. Then Lemma \ref{lemma1dual} also gives the following result:\\
\begin{theorem}
Let $R$ be any ring. The following are equivalent:\\
(1) Every exact complex of projective modules has all its cycles in $\mathcal{PGF}$.\\
(2) Every exact complex of projectively coresolved Gorenstein flat modules has all its cycles in $\mathcal{PGF}$.\\
(3) Every complex of projectively coresolved Gorenstein flat modules is a dg-$\mathcal{PGF}$ complex.
\end{theorem}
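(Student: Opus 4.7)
The plan is to invoke Lemma~\ref{lemma1dual} with the cotorsion pair $(\mathcal{A},\mathcal{B}) = (\mathcal{PGF},\, \mathcal{PGF}^\bot)$. The Saroch--Stovicek result recalled just before the statement guarantees that this pair is complete and hereditary over any ring $R$, and moreover identifies $\mathcal{PGF}\cap\mathcal{PGF}^\bot$ with the class of projective modules. These two inputs are exactly what Lemma~\ref{lemma1dual} requires, so its conclusion yields the desired chain of equivalences; it remains only to match each condition of the lemma with the corresponding condition of the theorem.

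The matching is mechanical. Condition (1) of Lemma~\ref{lemma1dual}---every exact complex with components in $\mathcal{A}\cap\mathcal{B}$ has cycles in $\mathcal{A}$---collapses, using $\mathcal{PGF}\cap\mathcal{PGF}^\bot = \mathrm{Proj}$, to the theorem's (1). Condition (2) of the lemma states that every exact complex in $dw(\mathcal{PGF})$ lies in $\widetilde{\mathcal{PGF}}$, which is literally the theorem's (2); similarly, condition (3) says that every complex in $dw(\mathcal{PGF})$ lies in $dg(\mathcal{PGF})$, which is literally the theorem's (3).

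There is no real obstacle beyond verifying the setup, since all the substantive work has already been packaged: the bookkeeping from cotorsion pairs of modules to induced cotorsion pairs of complexes is encapsulated in Lemma~\ref{lemma1dual}, and the deep structural facts about $(\mathcal{PGF},\mathcal{PGF}^\bot)$ are imported from Saroch--Stovicek. The one point worth flagging is the identification $\mathcal{PGF}\cap\mathcal{PGF}^\bot = \mathrm{Proj}$: if $M$ lies in the intersection, a defining exact complex of projectives with $M$ as a cycle yields a short exact sequence $0\to M\to P\to M'\to 0$ with $P$ projective and $M'\in\mathcal{PGF}$, and since $M\in\mathcal{PGF}^\bot$ this sequence splits, exhibiting $M$ as a summand of $P$, hence as projective.
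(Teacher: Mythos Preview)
Your proof is correct and matches the paper's own argument exactly: the paper also derives this theorem as an immediate application of Lemma~\ref{lemma1dual} to the complete hereditary cotorsion pair $(\mathcal{PGF},\mathcal{PGF}^\bot)$, using the Saroch--Stovicek identification $\mathcal{PGF}\cap\mathcal{PGF}^\bot=\mathrm{Proj}$. Your extra verification of that intersection is a nice touch, though the paper simply imports it.
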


We show that if the ring $R$ has the property that there is a nonnegative integer $n$ such that every Gorenstein flat module has projectively coresolved Gorenstein flat dimension less than or equal to $n$, then the statements from Theorem 7 are equivalent with those from Theorem 8. We use the notation $Pgf.d. A$ to denote the $\mathcal{PGF}$ dimension of the module $A$, and $Gfd. A$ to denote the Gorenstein flat dimension of the module $A$. We will use the following remark.

\begin{remark}% (1) If every Gorenstein flat module has finite projectively coresolved Gorenstein flat dimension, then there is a nonnegative integer $n$ such that $Pgf. d. G \le n$ for all $G \in \mathcal{GF}$ (since the direct sum of all Gorenstein flat modules, $H$, is still Gorenstein flat and so it has finite $\mathcal{PGF}$ dimension).\\
 If $A$ is a module of finite $\mathcal{PGF}$ dimension then $A$ has finite Gorenstein flat dimension and $Gfd. A \le Pgf.d. A$ (because $\mathcal{PGF} \subseteq \mathcal{GF}$).
\end{remark}

\begin{theorem}
Let $R$ be a ring. Assume that there is a nonnegative integer $n$ such that every Gorenstein flat module has $\mathcal{PGF}$ dimension less than or equal to $n$. Then the following are equivalent:\\
(1) Every exact complex of flat $R$-modules is F-totally acyclic. \\ %$Ex(Flat) = N_{tac}$.\\
(2) Every exact complex of Gorenstein flat $R$-modules is in $\widetilde{\mathcal{GF}}$.\\
(3) Every complex of Gorenstein flat $R$-modules is a dg-Gorenstein flat complex.\\
(4) Every exact complex of cotorsion-flat right $R$-modules is F-totally acyclic \\
(5) Every exact complex of projective modules has all its cycles in $\mathcal{PGF}$.\\
(6) Every exact complex of projectively coresolved Gorenstein flat modules has all its cycles in $\mathcal{PGF}$.\\
(7) Every complex of projectively coresolved Gorenstein flat modules is a dg-$\mathcal{PGF}$ complex.
\end{theorem}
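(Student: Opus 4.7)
The plan is to reduce the seven-way equivalence to the single biconditional $(1)\Leftrightarrow(5)$, since Theorem 7 already supplies $(1)\Leftrightarrow(2)\Leftrightarrow(3)\Leftrightarrow(4)$ and Theorem 8 supplies $(5)\Leftrightarrow(6)\Leftrightarrow(7)$; I may then use any of the auxiliary equivalences freely.

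For $(1)\Rightarrow(5)$ I would take an exact complex $P$ of projectives. Since projectives are flat, $P$ is an exact complex of flats and hence F-totally acyclic by $(1)$; in particular every cycle $Z_jP$ is Gorenstein flat, and the hypothesis gives $Pgf.d.\, Z_jP\le n$. The tail $\cdots\to P_{j+n+1}\to P_{j+n}\to\cdots\to P_{j+1}\to Z_jP\to 0$ of $P$ is a projective resolution of $Z_jP$, and projectives lie in $\mathcal{PGF}$. Because $\mathcal{PGF}$ is the left half of a hereditary cotorsion pair (Saroch--Stovicek) it is resolving, so a standard dimension-shifting argument forces the $n$-th syzygy $Z_{j+n}P$ to lie in $\mathcal{PGF}$. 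Since $j$ is arbitrary, every cycle of $P$ is in $\mathcal{PGF}$.

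For $(5)\Rightarrow(1)$, given an exact complex $F$ of flats, I would show that $I\otimes F$ is exact for every injective right $R$-module $I$ by building a tower of approximations with respect to the complete cotorsion pair $(\widetilde{\mathcal{PGF}}, dg(\mathcal{PGF}^\bot))$ in $Ch(R)$. Set $W_0=F$ and construct inductively short exact sequences of complexes
$$0\to W_{i+1}\to X_i\to W_i\to 0\qquad (i\ge 0)$$
with $X_i\in\widetilde{\mathcal{PGF}}$ and $W_{i+1}\in dg(\mathcal{PGF}^\bot)$. Exactness of $W_i$ and $X_i$ forces $W_{i+1}$ to be exact, hence $W_{i+1}\in\widetilde{\mathcal{PGF}^\bot}$. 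Flats are Gorenstein flat, so the hypothesis gives $Pgf.d.\, F_j\le n$, and termwise dimension-shifting along $0\to W_{i+1,j}\to X_{i,j}\to W_{i,j}\to 0$ (noting $X_{i,j}\in\mathcal{PGF}$) yields $Pgf.d.\, W_{i,j}\le n-i$. After $n$ steps, $W_{n,j}\in\mathcal{PGF}$ for every $j$, so $W_n$ is an exact complex with components in $\mathcal{PGF}$, and $(6)$ places it in $\widetilde{\mathcal{PGF}}$.

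A parallel induction on $i$ shows $\mathrm{Tor}_k(I,W_{i,j})=0$ for all $k\ge 1$: it holds for $W_{0,j}=F_j$ flat, and each step uses that both $X_{i,j}\in\mathcal{PGF}$ and $W_{i,j}$ have vanishing Tor with injectives in positive degrees. Consequently each short exact sequence above remains short exact after applying $I\otimes-$. Since $I\otimes X_i$ is exact ($X_i$ is F-totally acyclic) and $I\otimes W_n$ is exact for the same reason, the resulting long exact sequences in homology collapse to
$$H_m(I\otimes F)\cong H_{m-1}(I\otimes W_1)\cong\cdots\cong H_{m-n}(I\otimes W_n)=0,$$
so $F$ is F-totally acyclic. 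The main obstacle will be this second direction: keeping in lockstep both the decay of the $\mathcal{PGF}$-dimensions of the components and the Tor-vanishing needed to propagate F-total acyclicity up the tower of approximations, while verifying at every step that the exactness of $F$ is inherited by each $W_i$.
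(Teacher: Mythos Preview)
Your reduction and your argument for $(1)\Rightarrow(5)$ match the paper exactly. For the converse, the paper argues $(6)\Rightarrow(1)$ by a shorter and more elementary route than yours: it takes a length-$n$ partial \emph{projective} resolution of the complex $F$ in $Ch(R)$,
\[
0 \to C \to P_{n-1} \to \cdots \to P_0 \to F \to 0,
\]
observes that $Pgf.d.\,F_j\le n$ forces each $C_j\in\mathcal{PGF}$, applies $(6)$ to get $Z_lC\in\mathcal{PGF}$, and then reads off $Pgf.d.\,Z_lF\le n$ (hence $Gfd.\,Z_lF\le n$) directly from the induced exact sequence of cycle modules. A final index shift gives $Z_lF\in\mathcal{GF}$ for all $l$.

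Your tower of $\widetilde{\mathcal{PGF}}$-approximations in $Ch(R)$ is correct---the Tor-vanishing and the homology propagation both go through as you outline---but it is heavier: it invokes the completeness of $(\widetilde{\mathcal{PGF}},dg(\mathcal{PGF}^{\bot}))$ in $Ch(R)$, tracks two inductions in parallel, and only establishes F-total acyclicity indirectly via $H_m(I\otimes F)\cong H_{m-n}(I\otimes W_n)$. The paper's version needs nothing beyond the existence of projective resolutions of complexes and the resolving property of $\mathcal{PGF}$, and it yields the stronger intermediate conclusion $Pgf.d.\,Z_lF\le n$ along the way. Your approach would become genuinely advantageous only in a setting where projective resolutions of complexes are unavailable but $\mathcal{PGF}$-approximations are.
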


\begin{proof}
Statements (1), (2), (3) and (4) are equivalent by Theorem 7. And (5), (6) and (7) are equivalent by Theorem 8.\\
(1) $\Rightarrow$ (5) By hypothesis there is $n \ge 0$ such that any Gorenstein flat module has $\mathcal{PGF}$ dimension at most $n$. Let $X$ be an exact complex of projective modules. By (1), $Z_l(X) \in \mathcal{GF}$, so $Pgf.d. Z_l (X) \le n$. Thus $Z_{l+n}(X) \in \mathcal{PGF}$ for all $l$. By replacing $l$ with $l-n$ we obtain that $Z_l (X) \in \mathcal{PGF}$ for all $l$.\\
(6) $\Rightarrow$ (1) Let $F$ be an exact complex of flat modules and consider a partial projective resolution of $F$: $$0 \rightarrow C \rightarrow P_{n-1} \rightarrow \ldots P_0 \rightarrow F \rightarrow 0$$ Then $C$ is an exact complex. Since each $F_j$ is Gorenstein flat, so $Pgf.d. F_j \le n$ and since every $P_{i,j}$ is a projective module, it follows that each $C_j \in \mathcal{PGF}$. Then by (6) we have that $Z_l (C) \in \mathcal{PGF}$ for all $l$. Thus $Pgf.d. (Z_l(F)) \le n$ for all $l$, and therefore, by Remark 9 above, $Gfd. Z_lF \le n$ for all $l$. In particular we obtain that $Z_{l+n} (F) \in \mathcal{GF}$ for all $l$, and so, that all cycles of $F$ are Gorenstein flat modules.
\end{proof}

We recall that a ring $R$ is left $n$-perfect($n \ge 0$) if every flat left $R$-module has projective dimension less than or equal to $n$ (this is \cite{EJL}, Definition 2.1). We show that every left $n$-perfect ring satisfies the hypothesis of Theorem 10.\\ In the following $pd M$ denotes the projective dimension of the module $M$.

\begin{lemma}
Let $R$ be a left $n$-perfect ring. Then every Gorenstein flat $R$-module has $\mathcal{PGF}$ dimension less than or equal to $n$.

\end{lemma}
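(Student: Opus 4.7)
My plan is to prove the following slightly stronger statement by induction on $n$: for every $n \ge 0$ and every exact complex $\mathbf{X}$ of $R$-modules satisfying (i) $\mathbf{X} \otimes_R I$ is exact for every injective right $R$-module $I$, (ii) $\mathrm{pd}\, X_i \le n$ for every $i$, and (iii) $\mathrm{Tor}_k(X_i, I) = 0$ for every $k \ge 1$ and every injective right $R$-module $I$, every cycle of $\mathbf{X}$ has $\mathcal{PGF}$-dimension at most $n$. Applying this to the F-totally acyclic complex of flats witnessing $M \in \mathcal{GF}$---whose flat components satisfy (ii) by the $n$-perfect hypothesis and satisfy (iii) automatically---yields $\mathrm{Pgf.d.}\, M \le n$. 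The base case $n = 0$ is immediate: $\mathbf{X}$ is then an F-totally acyclic complex of projectives, so its cycles lie in $\mathcal{PGF}$ by definition.

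For the inductive step I would fix, for each $X_i$, a projective resolution $0 \to P_{i,n} \to \cdots \to P_{i,0} \to X_i \to 0$ of length $\le n$ (guaranteed by (ii)), and assemble them via Cartan--Eilenberg into a bicomplex $\{P_{i,j}\}_{i \in \mathbb{Z},\, 0 \le j \le n}$. Let $\mathbf{T} = \mathrm{Tot}(P_{\bullet,\bullet})$ be the total complex, with the canonical degreewise surjection $\pi \colon \mathbf{T} \twoheadrightarrow \mathbf{X}$ coming from the augmentations $P_{i,0} \twoheadrightarrow X_i$. A column-filtration spectral sequence shows $\mathbf{T}$ is exact: the vertical $E^1$-page is concentrated in $j = 0$ and recovers $\mathbf{X}$, which is exact. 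The identical argument applied to $\mathbf{T} \otimes_R I$ for an arbitrary injective $I$, this time using (iii) to collapse the vertical $E^1$-page to $\mathbf{X} \otimes_R I$ (exact by (i)), shows $\mathbf{T} \otimes_R I$ is exact. Hence $\mathbf{T}$ is an F-totally acyclic complex of projectives, and every cycle of $\mathbf{T}$ lies in $\mathcal{PGF}$.

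Next I would consider the kernel complex $\mathbf{K} = \ker \pi$, fitting into a short exact sequence $0 \to \mathbf{K} \to \mathbf{T} \to \mathbf{X} \to 0$ of complexes. Since $H(\mathbf{T}) = H(\mathbf{X}) = 0$ one obtains $H(\mathbf{K}) = 0$, and the same argument after tensoring with any injective $I$ gives that $\mathbf{K}$ is exact and F-totally acyclic. Each component decomposes as $\mathbf{K}_k = \ker(P_{k,0} \to X_k) \oplus \bigoplus_{j=1}^{n} P_{k-j,j}$; the short exact sequence $0 \to \ker(P_{k,0} \to X_k) \to P_{k,0} \to X_k \to 0$ shows that $\mathrm{pd}\,\ker(P_{k,0} \to X_k) \le n - 1$ and, via the long exact sequence of $\mathrm{Tor}$, that Tor-vanishing against injectives is inherited. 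Thus $\mathbf{K}$ satisfies (i)--(iii) at level $n - 1$, so by the inductive hypothesis every cycle of $\mathbf{K}$ has $\mathcal{PGF}$-dimension at most $n - 1$. The short exact sequence of exact complexes induces short exact sequences of cycles $0 \to Z_k(\mathbf{K}) \to Z_k(\mathbf{T}) \to Z_k(\mathbf{X}) \to 0$, and the standard dimension inequality $\mathrm{Pgf.d.}\, Z_k(\mathbf{X}) \le \max(\mathrm{Pgf.d.}\, Z_k(\mathbf{T}),\, \mathrm{Pgf.d.}\, Z_k(\mathbf{K}) + 1)$ for the resolving class $\mathcal{PGF}$ gives $\mathrm{Pgf.d.}\, Z_k(\mathbf{X}) \le \max(0, n) = n$, completing the induction.

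The main obstacle is formulating the inductive hypothesis so as to include both the projective-dimension bound (ii) and the Tor-vanishing condition (iii), which together are precisely what is preserved when passing to the kernel complex $\mathbf{K}$, where the Cartan--Eilenberg vertical height drops from $n$ to $n-1$. Once this framework is in place, everything else reduces to two applications of a column-filtration spectral sequence, an elementary direct-sum decomposition of each $\mathbf{K}_k$, and the standard dimension inequality for the resolving class $\mathcal{PGF}$.
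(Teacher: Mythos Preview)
Your proof is correct, but it takes a noticeably more elaborate route than the paper's. The paper works directly: starting from an F-totally acyclic complex $X$ of flats with $G=Z_0(X)$, it takes a length-$n$ partial projective resolution $0\to C\to P_{n-1}\to\cdots\to P_0\to X\to 0$ in $Ch(R)$. Because $R$ is $n$-perfect, each $C_j$ is projective. The key step is then a single appeal to \v{S}aroch--\v{S}\v{t}ov\'{\i}\v{c}ek's closure of $\mathcal{GF}$ under kernels of epimorphisms, which forces every $Z_l(C)$ to be Gorenstein flat; hence $C$ is an F-totally acyclic complex of projectives, so $Z_l(C)\in\mathcal{PGF}$, and the resolution of cycles $0\to Z_l(C)\to Z_l(P_{n-1})\to\cdots\to Z_l(P_0)\to Z_l(X)\to 0$ immediately gives $Pgf.d.\,Z_l(X)\le n$.

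Your argument instead replaces that closure property with an inductive framework: you carry the auxiliary Tor-vanishing hypothesis (iii) precisely so that the spectral-sequence collapse goes through for $\mathbf T\otimes I$, and so that (i)--(iii) descend to the kernel complex $\mathbf K$ at level $n-1$. This is heavier machinery (Cartan--Eilenberg bicomplex, spectral sequence, induction, the resolving-class dimension inequality for $\mathcal{PGF}$), but it has the virtue of being more self-contained: you never invoke the closure of $\mathcal{GF}$ under kernels of epimorphisms, only that $\mathcal{PGF}$ is resolving. In effect, the paper trades your induction and spectral sequence for one structural fact about $\mathcal{GF}$; both reach the same destination, but the paper's path is shorter.
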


\begin{proof}
%Since every flat has finite projective dimension, there is a nonnegative integer $n$ such that $pd. F \le n$ for any flat module $F$. We show that $Pgf.d. G \le n$ for any
Let $G \in \mathcal{GF}$. Since $G \in \mathcal{GF}$, there exists an F-totally acyclic complex $X$ such that $G = Z_0(X)$. Consider a partial projective resolution of $X$:
$$0 \rightarrow C \rightarrow P_{n-1} \rightarrow \ldots \rightarrow P_0 \rightarrow F \rightarrow 0.$$ Then $C$ is an exact complex. Since $pd. F_j \le n$, it follows that each $C_j$ is a projective module.

The exact sequence $$0 \rightarrow Z_l(C) \rightarrow Z_l(P_{n-1}) \rightarrow \ldots \rightarrow Z_l(P_0) \rightarrow Z_l(X) \rightarrow 0$$ with $Z_l(X) \in \mathcal{GF}$ and with $Z_l(P_j) \in Proj$ (for all $j$), gives that $Z_l(C) \in \mathcal{GF}$ for all $l$ (since by \cite{saroch:18:gor}, Corollary 3.12, the class of Gorenstein flat modules is closed under kernels of epimorphisms). Thus $C$ remains exact when tensoring it with any injective module. Since $C$ is an exact complex of projectives that is also $Inj \otimes - $ exact, we have that $Z_l(C) \in \mathcal{PGF}$ for all $l$. The same exact sequence, $$0 \rightarrow Z_l(C) \rightarrow Z_l(P_{n-1}) \rightarrow \ldots \rightarrow Z_l(P_0) \rightarrow Z_l(X) \rightarrow 0$$ gives that $Pgf. d. Z_l(X) \le n$, for every $l$. In particular, $Pgf. d. G \le n$.
\end{proof}

%So if $R$ is left $n$-perfect, then we have that every Gorenstein flat has $Pgf$ dimension at most $n$.
Thus Theorem 10 gives the following result:\\
%the equivalent characterizations from Theorem 2 can be extended to include the analogue results for the Gorenstein projective modules. The proof uses the fact that over such a ring $R$ the pair $(\mathcal{GP}, \mathcal{GP}^\bot)$ is a complete hereditary pair. As noted in Section 2, this gives a complete cotorsion pair, $(dg(\mathcal{GP}), \widetilde{\mathcal{GP}^\bot})$, in the category of complexes of right $R$-modules. %More precisely, we have:\\

\begin{theorem}
Let $R$ be a left $n$-perfect ring. The following statements are equivalent:
\begin{enumerate}
\item Every exact complex of flat (right) $R$-modules is F-totally acyclic.
\item Every exact complex of Gorenstein flat (right) $R$-modules is in $\widetilde{\mathcal{GF}}$.
\item Every complex of Gorenstein flat (right) $R$-modules is a dg-Gorenstein flat complex.
\item Every exact complex of projective (right) $R$-modules has all its cycles in $\mathcal{PGF}$.
\item Every exact complex of $\mathcal{PGF}$  (right) $R$-modules is in $\widetilde{\mathcal{PGF}}$.
\item Every complex of $\mathcal{PGF}$ modules is a dg-$\mathcal{PGF}$ complex.
\end{enumerate}
\end{theorem}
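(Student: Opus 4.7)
The plan is to obtain Theorem 12 as a direct corollary of Lemma 11 combined with Theorem 10. The genuine mathematical content has already been assembled in those two results, so the argument reduces to invoking them in the correct order with the correct witness integer.

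First I would apply Lemma 11. Because $R$ is left $n$-perfect, every flat left $R$-module has projective dimension at most $n$, and Lemma 11 uses this together with Saroch--Stovicek's closure of $\mathcal{GF}$ under kernels of epimorphisms (applied along a partial projective resolution) to conclude that every Gorenstein flat $R$-module has $\mathcal{PGF}$ dimension at most $n$. This is exactly the numerical hypothesis required by Theorem 10, with the very same integer $n$.

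Next I would invoke Theorem 10 with this integer $n$. It directly yields the mutual equivalence of its seven listed statements. Statements (1), (2), (3) of Theorem 12 are verbatim (1), (2), (3) of Theorem 10, while statements (4), (5), (6) of Theorem 12 match (5), (6), (7) of Theorem 10 (the $\mathcal{PGF}$ block supplied by Theorem 8). The fourth item of Theorem 10, concerning exact complexes of cotorsion-flat modules, is simply dropped from Theorem 12; removing it does not affect the equivalences among the remaining six, since Theorem 10 already establishes that all seven are mutually equivalent. The bridges between the two blocks, namely (1)$\Rightarrow$(4) and (5)$\Rightarrow$(1) in the numbering of Theorem 12, are precisely the two nontrivial implications proved inside Theorem 10 using the $\mathcal{PGF}$-dimension bound together with a partial projective resolution.

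I do not foresee any real obstacle: the only step of any substance is matching the integer $n$ supplied by the $n$-perfect hypothesis with the integer bounding $\mathcal{PGF}$ dimension of Gorenstein flat modules, and this matching is exactly what Lemma 11 delivers. Once this witness is in place, the existential quantifier \emph{``there exists $n \geq 0$''} in the hypothesis of Theorem 10 is satisfied, and the conclusion of Theorem 12 follows without further computation.
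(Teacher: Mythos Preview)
Your proposal is correct and matches the paper's approach exactly: the paper states Theorem 12 immediately after Lemma 11 with the single line ``Thus Theorem 10 gives the following result,'' so the intended proof is precisely to feed the bound from Lemma 11 into the hypothesis of Theorem 10 and read off the six surviving equivalences. Your identification of which items of Theorem 10 correspond to which items of Theorem 12 (with the cotorsion-flat statement dropped) is also correct.
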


We recall that any commutative noetherian ring of finite Krull dimension is an $n$-perfect ring (by \cite{EJL}, Example 2.3)

\begin{corollary}
Let $R$ be a commutative noetherian ring of finite Krull dimension. The following statements are equivalent:
\begin{enumerate}
\item Every exact complex of injective $R$-modules is totally acyclic.
\item Every exact complex of flat $R$-modules is F-totally acyclic.
\item Every exact complex of Gorenstein flat $R$-modules is in $\widetilde{\mathcal{GF}}$.
\item Every exact complex of Gorenstein injective $R$-modules is in $\widetilde{\mathcal{GI}}$.
\item Every complex of Gorenstein injective $R$-modules is a dg-Gorenstein injective complex.
\item Every complex of Gorenstein flat $R$-modules is a dg-Gorenstein flat complex.
\item Every exact complex of projective $R$-modules has all its cycles in $\mathcal{PGF}$.
\item Every exact complex of $\mathcal{PGF}$ $R$-modules is in $\widetilde{\mathcal{PGF}}$.
\item Every complex of $\mathcal{PGF}$ modules is a dg-$\mathcal{PGF}$ complex.
\item $R$ is a Gorenstein ring.
\end{enumerate}
\end{corollary}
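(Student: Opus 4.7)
The plan is to assemble this corollary from three ingredients already in the paper: Theorem 4, Theorem 12, and the authors' earlier characterization \cite{iacob:17:totally}, Corollary 2.

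First, I would observe that Theorem 4 applies to any ring and immediately gives the equivalence (1) $\Leftrightarrow$ (4) $\Leftrightarrow$ (5), since these are literally the three conditions of that theorem phrased for injective and Gorenstein injective complexes. So nothing new is needed for the injective block.

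Next, to handle the flat/projective block (2), (3), (6), (7), (8), (9), I would invoke the hypothesis that $R$ is commutative noetherian of finite Krull dimension. By \cite{EJL}, Example 2.3, such a ring is left $n$-perfect for some $n\geq 0$, so Theorem 12 applies verbatim and gives the equivalence of all six conditions in this block. (Equivalently, one can note that Theorem 7 already yields (2) $\Leftrightarrow$ (3) $\Leftrightarrow$ (6), Theorem 8 yields (7) $\Leftrightarrow$ (8) $\Leftrightarrow$ (9), and Theorem 12 bridges the two via $n$-perfectness.)

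It then remains to connect the two blocks with each other and with (10). For this I would cite \cite{iacob:17:totally}, Corollary 2, which, over a commutative noetherian ring of finite Krull dimension, asserts precisely the equivalence among ``every exact complex of injectives is totally acyclic,'' ``every exact complex of flats is F-totally acyclic,'' their projective counterpart, and $R$ being Gorenstein in the sense of Iwanaga. This links (1) with (2) and with (10), closing the loop. The reverse implication (10) $\Rightarrow$ (1), (2), (7) is already recorded in Remark~1 of the preliminaries, so no additional work is required in that direction.

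The main obstacle is not really inside this corollary at all: all genuinely hard content lies in Theorems 4, 7, 8, 12, and in the cited \cite{iacob:17:totally}, Corollary 2. The only verification needed is bookkeeping, namely that the $n$-perfect hypothesis of Theorem 12 is automatically satisfied under ``commutative noetherian of finite Krull dimension,'' and that the two blocks (1)--(5) and (2)--(9) both meet the Gorenstein characterization provided by the earlier paper. Once these are pointed out, the chain of equivalences follows without any additional computation.
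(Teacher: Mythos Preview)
Your proposal is correct and follows essentially the same strategy as the paper: assemble the corollary from Theorem~12 together with the cited result from \cite{iacob:17:totally}. The only cosmetic difference is how the blocks are partitioned---the paper invokes \cite{iacob:17:totally} directly for the full equivalence of (1)--(6) and (10), and then Theorem~12 for (2), (7), (8), (9), whereas you carve out (1), (4), (5) separately via Theorem~4 and lean on Theorem~12 for the larger block (2), (3), (6), (7), (8), (9) before citing \cite{iacob:17:totally} to tie in (10); either slicing works and no new idea is involved.
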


\begin{proof}
Statements (1), (2), (3), (4), (5), (6) and (10) are equivalent by \cite{iacob:17:totally}, Corollary 1, and (2), (7), (8) and (9) are equivalent by Theorem 12 above.
\end{proof}
We show next that over a coherent ring $R$ with the property that $\mathcal{DI}^+ \subseteq \mathcal{GF}$, every module $M$ such that $M^+$ is Gorenstein flat is a cycle of an exact complex of FP-injective modules. We will use the following:\\

\begin{lemma}
Let $R$ be any ring. If $A \in ^\perp \mathcal{DI}$ then $A^+ \in \mathcal{GC}$.
\end{lemma}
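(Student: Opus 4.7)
The plan is to verify that $A^+\in\mathcal{GC}=\mathcal{GF}^\perp$, i.e.\ that $\mathrm{Ext}^1_R(G,A^+)=0$ for every Gorenstein flat module $G$; the strategy is to dualize so the hypothesis $A\in{}^\perp\mathcal{DI}$ becomes directly applicable. First, the standard Pontryagin/Hom-Tensor adjunction produces natural isomorphisms
\[
\mathrm{Ext}^1_R(G,A^+)\;\cong\;\mathrm{Tor}^R_1(G,A)^+\;\cong\;\mathrm{Ext}^1_R(A,G^+),
\]
so the problem reduces to showing $\mathrm{Ext}^1_R(A,G^+)=0$ for each $G\in\mathcal{GF}$. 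Using the hypothesis, this in turn would follow if one knew that $G^+$ is Ding injective whenever $G$ is Gorenstein flat.

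The main content of the proof is therefore the claim that $G\in\mathcal{GF}$ implies $G^+\in\mathcal{DI}$. Writing $G=Z_0(F)$ for an F-totally acyclic complex $F$ of flat modules, the dual complex $F^+$ is exact (since $(-)^+$ is faithfully exact) and consists of injective modules (since the character of a flat is injective), and $G^+$ appears as one of its cycles. The Ding-injectivity test on $F^+$ reduces, via the complex-level isomorphism $\mathrm{Hom}_R(B,F^+)\cong (F\otimes_R B)^+$, to the exactness of $F\otimes_R B$ for every FP-injective (absolutely pure) module $B$.

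The hard step is this last exactness: an enhancement of the defining exactness of F-totally acyclic complexes from the injective class to the larger absolutely pure class, over an arbitrary ring. I would secure it by invoking the Saroch--Stovicek structural analysis of the cotorsion pair $(\mathcal{GF},\mathcal{GC})$ over arbitrary rings \cite{saroch:18:gor}, which yields the Tor-orthogonality of every Gorenstein flat module to every absolutely pure module in all positive degrees; splitting $F$ into short exact sequences with Gorenstein flat cycles, tensoring each with $B$, and patching the resulting short exact sequences then gives the required exactness of $F\otimes_R B$. Once $G^+\in\mathcal{DI}$ is established, the hypothesis $A\in{}^\perp\mathcal{DI}$ together with the duality chain of the first step close the argument.
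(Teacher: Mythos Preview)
Your overall strategy coincides with the paper's: reduce via the duality isomorphisms $\mathrm{Ext}^1(G,A^+)\cong\mathrm{Tor}_1(G,A)^+\cong\mathrm{Ext}^1(A,G^+)$ to the assertion that $G^+$ is Ding injective whenever $G$ is Gorenstein flat, and then invoke the hypothesis $A\in{}^\perp\mathcal{DI}$. The paper dispatches that assertion in one line by citing \cite{chen:14:coherent}, Proposition~5.3.4, whereas you attempt to derive it.

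The weak point is your final step. You assert that the Saroch--Stovicek structural analysis ``yields the Tor-orthogonality of every Gorenstein flat module to every absolutely pure module in all positive degrees'' over an arbitrary ring, but you do not point to a specific statement in \cite{saroch:18:gor}, and this claim does not fall out of their cotorsion-pair theorems in any obvious way. The natural attempts---embedding an FP-injective $B$ purely into an injective and running the long exact Tor sequence, or embedding $G$ into a flat--cotorsion module with Gorenstein flat cokernel via completeness of $(\mathcal{GF},\mathcal{GC})$---only shift the desired vanishing to another Tor group without closing the loop. As written this is a gap: you should either pin down a precise reference for $G\in\mathcal{GF}\Rightarrow G^+\in\mathcal{DI}$ (as the paper does with Chen--Zhang) or supply a self-contained argument that $\mathrm{Tor}_1^R(G,B)=0$ for every Gorenstein flat $G$ and every FP-injective $B$ over an arbitrary ring.
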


\begin{proof}
Let $A \in ^\perp \mathcal{DI}$, and let $B$ be any Gorenstein flat module. We have $\mathrm{Ext}^1(B, A^+) \simeq \mathrm{Ext}^1(A, B^+) = 0$ since $B^+$ is Ding injective (by \cite{chen:14:coherent}, Proposition 5.3.4).
\end{proof}

%\begin{lemma}
%If $R$ is a commutative Ding Chen ring of finite Krull dimension then $G.i.d. M < \infty$ for all $R$-modules $M$.
%\end{lemma}

\begin{lemma}
Let $R$  be a coherent ring such that $\mathcal{DI}^+ \subseteq \mathcal{GF}$. If $M^+$ is Gorenstein flat then $M$ has an exact left FP-injective resolution, and so, $M$ is a cycle of an acyclic complex of FP-injective modules.
\end{lemma}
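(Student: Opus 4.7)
The plan is to use the Gorenstein flatness of $M^+$ together with character duality to produce an F-totally acyclic complex whose dual is an exact complex of FP-injective modules, and then to transfer this structure from $M^{++}$ down to $M$ via the canonical pure embedding $M \hookrightarrow M^{++}$. Concretely, I would fix an F-totally acyclic complex of flat right $R$-modules
\[
F:\quad \cdots \to F_1 \to F_0 \to F_{-1} \to \cdots
\]
with $Z_0(F)=M^+$ and each cycle in $\mathcal{GF}$. Applying $(-)^+$ to the short exact sequence $0\to M^+\to F_0\to Z_{-1}(F)\to 0$ yields
\[
0 \to Z_{-1}(F)^+ \to F_0^+ \to M^{++} \to 0,
\]
in which $F_0^+$ is FP-injective (by Lambek's theorem, since $F_0$ is flat) and $Z_{-1}(F)^+$ is Ding injective by the Chen result invoked in the previous lemma (character duals of Gorenstein flat modules are Ding injective over a coherent ring); the same statement gives $M^{++}\in\mathcal{DI}$.

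Next I would exploit the standard fact that the canonical map $M\to M^{++}$ is a pure monomorphism, since the natural map $M^{+++}\to M^+$ is a split epimorphism. Pulling back the surjection $F_0^+\twoheadrightarrow M^{++}$ along this pure embedding produces a module $A_0 \subseteq F_0^+$ fitting in a short exact sequence
\[
0 \to Z_{-1}(F)^+ \to A_0 \to M \to 0,
\]
so $A_0$ surjects onto $M$ with Ding injective kernel. The decisive technical point is to show that $A_0$ is FP-injective. The natural route is to check that $A_0\hookrightarrow F_0^+$ is a pure monomorphism---pure submodules of FP-injective modules are FP-injective, since purity transfers the vanishing of $\mathrm{Ext}^1(Q,-)$ for finitely presented $Q$ from $F_0^+$ to $A_0$. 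This reduces to lifting homomorphisms from finitely presented modules across the pullback diagram, and it is precisely here that the hypothesis $\mathcal{DI}^+\subseteq \mathcal{GF}$ together with the previous lemma (which relates $^{\perp}\mathcal{DI}$ to the Gorenstein cotorsion class $\mathcal{GC}$) supplies the required Ext-vanishing against $Z_{-1}(F)^+$.

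Once $A_0$ has been shown to be FP-injective, the iteration is routine: the kernel $Z_{-1}(F)^+$ is Ding injective, hence a cycle of an exact complex of injectives with the Ding exactness property, and that complex supplies a surjection from an injective (in particular, FP-injective) module onto $Z_{-1}(F)^+$ whose kernel is again a cycle of the same complex, and so again Ding injective. Repeating this step indefinitely builds an exact left FP-injective resolution $\cdots \to A_2 \to A_1 \to A_0 \to M \to 0$. Splicing it with a right FP-injective coresolution of $M$---which always exists, obtained by iterating embeddings into injective envelopes since every injective module is FP-injective---realizes $M$ as a cycle of an acyclic complex of FP-injective modules. I expect the principal obstacle to be precisely the verification that the pullback $A_0$ is FP-injective; this is where the hypothesis $\mathcal{DI}^+\subseteq \mathcal{GF}$ and the preceding lemma are jointly invoked, and the construction does not go through for an arbitrary module $M$ lacking the hypothesis on $M^+$.
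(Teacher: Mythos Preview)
Your approach is genuinely different from the paper's, and the gap you flag yourself—showing $A_0$ is FP-injective—is real and not closed by the ingredients you name. To see why, note that purity of $A_0 \hookrightarrow F_0^+$ would require every map $P \to M^{++}/M$ (for $P$ finitely presented) to lift to $F_0^+$. Purity of $M \hookrightarrow M^{++}$ lets you lift to $M^{++}$, but lifting further to $F_0^+$ runs into the obstruction group $\mathrm{Ext}^1(P, Z_{-1}(F)^+)$, and nothing you have forces this to vanish: $Z_{-1}(F)^+$ is Ding injective, so it is $\mathrm{Ext}$-orthogonal to FP-injective modules, not to finitely presented ones. Neither the hypothesis $\mathcal{DI}^+ \subseteq \mathcal{GF}$ nor Lemma~14 (which concerns ${}^\perp\mathcal{DI}$, a class your $A_0$ is not known to lie in) supplies that vanishing.

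The paper avoids this entirely by starting from the complete hereditary cotorsion pair $({}^\perp\mathcal{DI}, \mathcal{DI})$ rather than from an F-totally acyclic complex for $M^+$. Completeness gives a short exact sequence $0 \to A \to C \to M \to 0$ with $C \in {}^\perp\mathcal{DI}$ and $A \in \mathcal{DI}$. Dualizing, $M^+$ is Gorenstein flat by hypothesis and $A^+$ is Gorenstein flat by $\mathcal{DI}^+ \subseteq \mathcal{GF}$, so $C^+ \in \mathcal{GF}$ by closure under extensions; and because $C \in {}^\perp\mathcal{DI}$, Lemma~14 gives $C^+ \in \mathcal{GC}$. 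Hence $C^+ \in \mathcal{GF} \cap \mathcal{GC}$ is flat, and coherence yields $C$ FP-injective. The point is that the cotorsion-pair approach hands you the precover already sitting in ${}^\perp\mathcal{DI}$, which is exactly the input Lemma~14 needs; your pullback $A_0$ carries no such information, and that is why the argument stalls.
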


%Remark. If $R$ is such that every exact complex of injectives is totally acyclic, then $\mathcal{GI} = \mathcal{DI}$.

%de dem.

\begin{proof}
Let $M$ be such that its character module, $M^+$, is Gorenstein flat.

Since $R$ is coherent, the Ding injectives coincide with the Gorenstein AC-injectives, so $(^\perp \mathcal{DI}, \mathcal{DI})$ is a complete hereditary cotorsion pair. Thus, there is an exact sequence $$0 \rightarrow A \rightarrow C \rightarrow M \rightarrow 0$$ with $C \in ^\perp \mathcal{DI}$ and with $A$ Ding injective. This gives an exact sequence $$0 \rightarrow M^+ \rightarrow C^+ \rightarrow A^+ \rightarrow 0$$ with both $M^+$ and $A^+$ being Gorenstein flat. By \cite{saroch:18:gor}, Corollary 3.12, the class $\mathcal{GF}$ is closed under extensions. So $C^+$ is Gorenstein flat . By Lemma 14, $C^+$ is also Gorenstein cotorsion, so it is in $\mathcal{GF} \bigcap \mathcal{GC}$. Thus $C^+$ is flat. Since $R$ is coherent, this implies that $C$ is FP-injective (by \cite{chen:14:coherent}, Theorem 2.2.13). Thus $M$ has a special surjective FP-injective precover with kernel $A \in \mathcal{DI}$. Since $A$ has an exact FP-injective left resolution, by pasting them together, we obtain an exact left FP-injective resolution of $M$. Every $R$-module, $M$ in particular, has an exact right FP-njective resolution. By pasting them together we obtain that $M$ is a cycle of an acyclic complex of FP-injective modules.
\end{proof}

We can prove now:\\

\begin{theorem}
Let $R$ be a coherent ring and let $n < \infty$ be a non negative integer.

The following statements are equivalent:\\
(1) Every exact complex of FP-injective modules has all its cycles of Ding injective dimension at most $n$.\\
(2) Every exact complex of injective modules has all its cycles Ding injective modules, and and every $R$-module $M$ whose character module, $M^+$, is Gorenstein flat, has Ding injective dimension at most $n$.\\
(3) Every exact complex of flat right $R$-modules is F-totally acyclic, and every $R$-module $M$ whose character module, $M^+$, is Gorenstein flat, has Ding injective dimension at most $n$.\\
%(3) Every exact complex of injective modules has all its cycles Ding injective modules, and and every $R$-module $M$ whose character module, $M^+$, is Gorenstein flat, has Ding injective dimension at most $n$.\\
%(4) $R$ is a Gorenstein ring.
\end{theorem}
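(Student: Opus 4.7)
The plan is to establish the cycle $(1) \Rightarrow (2) \Rightarrow (3) \Rightarrow (1)$. The two crucial ingredients beyond standard cotorsion-pair manipulations are Lemma 15, which (over a coherent ring in the applicable setting) converts the hypothesis $M^+ \in \mathcal{GF}$ into the existence of an acyclic FP-injective complex with $M$ as a cycle, and the character-module duality, which over a coherent ring sends an exact complex of flat right modules to an exact complex of injective left modules via $(-)^+$ and, via the adjunction $(F \otimes_R I)^+ \cong \mathrm{Hom}_R(I, F^+)$, interchanges F-total acyclicity of $F$ with total acyclicity of $F^+$.

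For $(1) \Rightarrow (2)$, the second clause of $(2)$ is immediate from Lemma 15: if $M^+ \in \mathcal{GF}$ then $M$ is a cycle of an acyclic complex of FP-injectives, and $(1)$ bounds its Ding injective dimension by $n$. For the first clause, let $I$ be an exact complex of injectives and fix $j$. Since injectives are FP-injective, $(1)$ gives that the Ding injective dimension of every cycle of $I$ is at most $n$. The exact sequence
$$0 \to Z_{j+n} I \to I_{j+n} \to I_{j+n-1} \to \cdots \to I_{j+1} \to Z_j I \to 0$$
exhibits $Z_j I$ as the $n$-th cosyzygy of $Z_{j+n} I$ in an injective (hence Ding injective) coresolution. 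Since $R$ is coherent, $\mathcal{DI} = \mathcal{GI}_{ac}$ and $(^\perp\mathcal{DI}, \mathcal{DI})$ is a complete hereditary cotorsion pair, so $\mathcal{DI}$ is a coresolving class. The standard consequence is that any $n$-th cosyzygy in an injective coresolution of a module of Ding injective dimension at most $n$ is itself Ding injective. Thus $Z_j I \in \mathcal{DI}$, as desired.

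For $(2) \Rightarrow (3)$, the second clauses agree verbatim. For the first, given an exact complex $F$ of flat right $R$-modules, $F^+$ is an exact complex of injective left $R$-modules, and by the first clause of $(2)$ all its cycles are Ding injective, in particular Gorenstein injective, which forces $F^+$ to be totally acyclic; the adjunction above then translates this to F-total acyclicity of $F$. For $(3) \Rightarrow (1)$, let $X$ be an exact complex of FP-injective modules. By \cite{chen:14:coherent}, Theorem 2.2.13 (as invoked in Lemma 15), each $X_k^+$ is flat, so $X^+$ is an exact complex of flat right modules; the first clause of $(3)$ makes it F-totally acyclic, so all of its cycles are Gorenstein flat. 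Applying $(-)^+$ to the short exact sequences $0 \to Z_k X \to X_k \to Z_{k-1} X \to 0$ coming from exactness of $X$, these cycles are identified (up to reindexing) with the character modules $(Z_j X)^+$; hence $(Z_j X)^+ \in \mathcal{GF}$ for every $j$. The second clause of $(3)$ then bounds the Ding injective dimension of $Z_j X$ by $n$, yielding $(1)$.

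The step I expect to be the main obstacle is the dimension-shifting in $(1) \Rightarrow (2)$ that promotes ``Ding injective dimension at most $n$'' to the sharper ``Ding injective''. This depends on the coresolving nature of $\mathcal{DI}$ under the coherent hypothesis (via the Saroch-Stovicek cotorsion pair) and on the bound $n$ being uniform across all cycles of the complex, which is exactly what the hypothesis in $(1)$ provides; the other implications are essentially applications of character duality and Lemma 15.
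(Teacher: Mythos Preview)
Your overall cycle $(1)\Rightarrow(2)\Rightarrow(3)\Rightarrow(1)$ matches the paper's, and your arguments for the first clause of $(2)$, for $(2)\Rightarrow(3)$, and for $(3)\Rightarrow(1)$ are essentially the same as the paper's (the paper phrases the F-total acyclicity step via Holm's characterization of Gorenstein flatness rather than via the tensor--Hom adjunction, but that is a cosmetic difference).

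There is, however, a genuine gap in your $(1)\Rightarrow(2)$, second clause. You invoke Lemma~15 as if it applied to any coherent ring, but Lemma~15 carries the additional hypothesis $\mathcal{DI}^+\subseteq\mathcal{GF}$, and you never verify it (the parenthetical ``in the applicable setting'' does not do this work). The paper spends a paragraph deriving exactly this inclusion from $(1)$ before it is allowed to use Lemma~15: having already established the first clause of $(2)$, it shows that every exact complex of flat right modules is F-totally acyclic (by applying $(-)^+$ and using that cycles of $Y^+$ are Ding injective, hence Gorenstein injective, and then Holm's theorem); then, given $G\in\mathcal{DI}$, it takes the defining acyclic complex of injectives $I$ with $G$ as a cycle, notes that $I^+$ is an exact complex of flat right modules (coherence), concludes $I^+$ is F-totally acyclic, and hence that $G^+$ is Gorenstein flat. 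Only then is Lemma~15 applicable. Without this verification your appeal to Lemma~15 is circular in spirit: the lemma's hypothesis is not a standing assumption but something that must be extracted from $(1)$.
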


\begin{proof}
1() $\Rightarrow$ (2) Let $X = \ldots \rightarrow X_1 \rightarrow X_0 \rightarrow X_{-1} \rightarrow \ldots$ be an exact complex of injective modules. By (1), each cycle, $Z_l(X)$ has Ding injective dimension at most $n$. Since each $X_j$ is injective it follows that $Z_{l-n}(X)$ is Ding injective, for all $l$. By replacing $l$ with $l+n$ we obtain that $Z_l(X)$ is Ding injective for all $l$.\\
Let $Y$ be an exact complex of flat modules. Then $Y^+$ is an exact complex of injectives (by \cite{enochs:00:relative}, Theorem 3.2.9), so $Z_l(Y)^+$ is Ding injective for all $l$. Since the ring is coherent, it follows that each cycle of $Y$ is Gorenstein flat (by \cite{holm:05:gor.dim}, Theorem 3.6). Thus, if (1) holds, then every exact complex of flat modules is F-totally acyclic.

Let $G$ be a Ding injective module. Then $G$ is a cycle of an acyclic complex of injective modules, $I$. Since $I^+$ is an exact complex of flat right $R$-modules (by \cite{chen:14:coherent}, Theorem 2.2.13), it follows, by the above, that $I^+$ is F-totally acyclic. Thus $Z_l (I^+)$ is Gorenstein flat for all $l$ and so, in particular $G^+$ is Gorenstein flat. So if (1) holds then the ring $R$ satisfies the hypotheses of Lemma 15.\\
Let $M$ be such that $M^+$ is Gorenstein flat. By Lemma 15, $M$ is a cycle of an exact complex of FP-injective modules. By (1), $M$ has Ding injective dimension at most $n$.\\
(2) $\Rightarrow$ (3) If $Y$ is an exact complex of flat right $R$-modules, then $Y^+$ is an exact complex of injective modules, so $Z_l(Y)^+$ is Ding injective for all $l$. Since the ring is coherent, it follows that each cycle of $Y$ is Gorenstein flat.\\
The second statement of (3) follows immediately from (2).\\
 (3) $\Rightarrow$ (1) Let $X$ be an exact complex of FP-injective modules. Since the ring is coherent, we have that $X^+$ is an exact complex of flat modules (by \cite{chen:14:coherent}, Theorem 2.2.13). By (3), it is F-totally acyclic, so $Z_l(X)^+$ is Gorenstein flat, for all l. By (3), each $Z_l(X)$ has Ding injective dimension at most $n$.

\end{proof}

A similar argument to the proof of Theorem 16 gives the following characterization of Gorenstein rings.

%DE VERIFICAT DACA TREBUIE COM sau nu

\begin{theorem}
Let $R$ be a commutative coherent ring. %of finite Krull dimension.
The following statements are equivalent:\\
(1) Every exact complex of FP-injective modules has all its cycles Ding injective modules.\\
(2) Every exact complex of injective modules has all its cycles Ding injective modules, and and every $R$-module $M$ whose character module, $M^+$, is Gorenstein flat, is Ding injective.\\
(3) Every exact complex of flat modules is F-totally acyclic, and every $R$-module $M$ whose character module, $M^+$, is Gorenstein flat, is a Ding injective module.\\
If moreover $R$ has finite Krull dimension, then (1), (2) and (3) are also equivalent to\\
(4) $R$ is a Gorenstein ring.
\end{theorem}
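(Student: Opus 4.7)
The plan is to reduce the equivalence of (1)--(3) to Theorem 16 applied with $n=0$, and then to derive the equivalence with (4) using Corollary 13 together with the additional finite Krull dimension hypothesis.

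For the equivalence $(1)\Leftrightarrow(2)\Leftrightarrow(3)$, I would substitute $n=0$ throughout the statement and proof of Theorem 16; the phrase ``Ding injective dimension at most $n$'' then becomes simply ``Ding injective,'' giving precisely the conditions (1)--(3) here. Each of the three implications carries over verbatim, since the arguments rely only on character-module duality over coherent rings (the fact from \cite{chen:14:coherent} that $(-)^+$ interchanges exact complexes of flat modules and exact complexes of FP-injective modules), Holm's result that $Y^+$ Gorenstein injective forces $Y$ Gorenstein flat over coherent rings (\cite{holm:05:gor.dim}, Theorem 3.6), and Lemma 15 --- none of which depend on $n>0$. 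The only point that needs rechecking is the hypothesis $\mathcal{DI}^+\subseteq\mathcal{GF}$ required by Lemma 15, but this is delivered directly from the first halves of conditions (1)--(3) exactly as in Theorem 16: any Ding injective $G$ is a cycle of an exact complex of injectives $I$, and $I^+$ is an exact complex of flats which is F-totally acyclic, so $G^+\in\mathcal{GF}$.

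For the implication $(4)\Rightarrow(1)$, suppose $R$ is Gorenstein; then $R$ is noetherian, so every FP-injective module is injective, and the classes $\mathcal{GI}$ and $\mathcal{DI}$ coincide. By Remark 1, every acyclic complex of injectives is totally acyclic, hence has Gorenstein injective (equivalently, Ding injective) cycles, which is precisely condition (1).

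For the converse direction $(3)\Rightarrow(4)$, the finite Krull dimension hypothesis is essential. From (3) alone, every exact complex of flat $R$-modules is F-totally acyclic; if $R$ were already noetherian, Corollary 13 would immediately yield that $R$ is Gorenstein. The main obstacle is therefore to promote the coherent hypothesis to noetherian under the additional assumptions of (3) and finite Krull dimension. I would attempt this by exploiting the second half of (3) --- every $R$-module $M$ with $M^+\in\mathcal{GF}$ is Ding injective --- to force $R$ to have finite self-FP-injective dimension (a Ding-Chen-type condition), and then, using that $R$ is commutative of finite Krull dimension, upgrade this to the full Iwanaga-Gorenstein condition so that Corollary 13 applies. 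This last bridging step, of extracting noetherianness from the purely homological data of (3) together with commutativity and finite Krull dimension, is where I expect the real difficulty to lie.
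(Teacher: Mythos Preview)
Your reduction of $(1)\Leftrightarrow(2)\Leftrightarrow(3)$ to Theorem 16 with $n=0$ matches the paper exactly, and your argument for $(4)\Rightarrow(1)$ is fine (the paper does $(4)\Rightarrow(3)$ instead, but the content is the same).

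The genuine gap is in the converse direction, where you correctly identify that the crux is extracting noetherianity, but your proposed route---deducing a Ding--Chen type finite self-FP-injective dimension from the second half of $(3)$ and then upgrading---is not carried out and is not the paper's strategy. The paper's argument is much shorter and uses $(1)$ rather than $(3)$: given any family $(E_i)_i$ of injective $R$-modules, the direct sum $E=\bigoplus E_i$ is FP-injective (since $R$ is coherent), so the trivial exact complex $0\to E\xrightarrow{\,\mathrm{id}\,}E\to 0$ is an exact complex of FP-injectives; condition $(1)$ then forces $E$ to be Ding injective. On the other hand each $E_i$ lies in $^{\perp}\mathcal{DI}$, and this class is closed under direct sums, so $E\in{}^{\perp}\mathcal{DI}\cap\mathcal{DI}$, which is the class of injectives. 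Thus arbitrary direct sums of injectives are injective, whence $R$ is noetherian. At that point $R$ is commutative noetherian of finite Krull dimension and every exact complex of injectives is totally acyclic (a consequence of $(1)$), so Corollary 13 (equivalently \cite{iacob:17:totally}, Corollary 2) gives that $R$ is Gorenstein. Note that the finite Krull dimension hypothesis is used only in this last invocation, not in the noetherianity step.
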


\begin{proof} Statements (1), (2) and (3) are equivalent by Theorem 16, for the case $n=0$.\\
%(1) $\Rightarrow$ (3). Every exact complex of injective modules is, in particular, an exact complex of FP-injective modules, so, by (1), all its cycles are Ding injective modules.\\
%Let $Y$ be an exact complex of flat modules. Then $Y^+$ is an exact complex of injectives, so $Z_l(Y^+) = Z_l (Y)^+$ is Ding injective, for all $l$. Since $R$ is coherent, it follows that $Z_l(Y)$ is Gorenstein flat, for all $l$, and therefore $Y$ is F-totally acyclic. Thus, if (1) holds, then the ring $R$ satisfies the hypotheses of Lemma 3. So, if $M$ is such that $M^+ \in \mathcal{GF}$ then $M$ has an exact left FP-injective resolution. Thus $M$ is a cycle of an exact complex of FP-injective modules, so by (1), $M$ is Ding injective.\\
%(3) $\Rightarrow$ (2). The argument above shows that if $Y$ is an exact complex of flat modules, then all its cycles are Gorenstein flat modules. The second statement of (2) follows immediately from (3).\\
%(2) $\Rightarrow$ (1). Let $X$ be an exact complex of FP-injective modules. Then $X^+$ is an exact complex of flat modules, so each $Z_l(X^+)$ is Gorenstein flat. By (2), $Z_l(X)$ is Ding injective, for all $l$.\\
Now assume that $R$ has finite Krull dimension.\\
(1) $\Rightarrow$ (4). Let $(E_i)_i$ be a family of injective $R$-modules, and let $E = \oplus E_i$. Since each $E_i$ is FP-injective and $R$ is coherent, $E$ is FP-injective. Then $0 \rightarrow E = E \rightarrow 0$ is an exact complex of FP-injective modules, so by (1), $E = Ker(E \rightarrow 0)$ is a Ding injective module. Also, each $E_i$ is in $^\bot \mathcal{DI}$, and so $E = \oplus E_i \in ^\bot \mathcal{DI}$. Thus $E \in \mathcal{DI} \bigcap ^\bot \mathcal{DI}$, so $E$ is injective (\cite{enochs:00:relative}). Since the class of injective modules is closed under direct sums, the ring $R$ is noetherian (\cite{enochs:00:relative}, Theorem 3.1.17). Since $R$ is commutative noetherian of finite Krull dimension, and since (1) implies that every exact complex of injectives is totally acyclic, it follows that $R$ is a Gorenstein ring (by \cite{iacob:17:totally}, Corollary 2).\\
(4) $\Rightarrow$ (3) By \cite{enochs:00:relative}, Theorem 12.3.1, if $R$ is a Gorenstein ring then every every exact complex of flat $R$-modules is F-acyclic. Also, a Gorenstein ring satisfies the hypothesis of Lemma 13. So, if $M^+$ is Gorenstein flat, then $M$ is a cycle of an exact complex of FP-injective modules. Over a noetherian ring these are the same as the injective modules. Since $R$ is Gorenstein, every cycle of an exact complex of injectives is Ding injective.

\end{proof}

%One of the main open problems in Gorenstein homological algebra is: ``What is the most general type of ring over which the class of Gorenstein injective modules is (pre)covering (preenveloping respectively)?". We give a sufficient condition in order for $\mathcal{GI}$ be both covering and enveloping.\\

%\begin{theorem}
%Let $R$ be a two sided noetherian ring such that every exact complex of injective $R$-modules is totally acyclic. Then the class of Gorenstein injective modules is both covering and enveloping in $R\textrm{-Mod}$.%
%\end{theorem}

%\begin{proof}
%By Proposition 1, over such a ring $R$ the character module of any Gorenstein injective $R$-module is Gorenstein flat. By Iacob \cite[Theorems 3 and 5]{iacob:15:gor.inj}, the class of Gorenstein injective $R$-modules is both covering and enveloping.

%\end{proof}

%\begin{theorem}
%Let $R$ be a two sided noetherain ring such that every exact complex of injective $R$-modules is totally acyclic. Then the class of Gorenstein flat right $R$-modules is preenveloping in $R$-Mod.
%\end{theorem}

%\begin{proof}
%Since over such a ring the character modules of Gorenstein injective modules are Gorenstein flat the result follows from Iacob \cite[Theorem 1]{iacob:15:gor.flat}.
%\end{proof}

\bibliographystyle{plain}

%\newpage

% \setcounter{page}{1}

\bibliographystyle{plain}

\end{document}